\newtheorem{definition}{Definition}
\newtheorem{lemma}[definition]{Lemma}
\newtheorem{theorem}[definition]{Theorem}
\newtheorem{corr}[definition]{Corollary}
\newcommand{\R}{\mathbb R}
\newcommand*\phantomas[3][c]{%
\ifmmode
    \makebox[\widthof{$#2$}][#1]{$#3$}%
\else
    \makebox[\widthof{#2}][#1]{#3}%
\fi
}
\newcommand{\revise}[1]{{#1}}
\begin{document}

\title{\texttt{trlib}: A vector-free implementation of the GLTR method for iterative solution of the trust region problem}

\author{
  F. Lenders \and C. Kirches \and A. Potschka
}

    \maketitle

    \begin{abstract}

        We describe \texttt{trlib}, a library that implements a variant
        of Gould's Generalized Lanczos method (Gould et al. in
        SIAM J.~Opt.\ 9(2), 504--525, 1999) for solving the trust region problem.

        Our implementation has several distinct features that set it apart from
        preexisting ones. We implement both conjugate gradient (CG) and Lanczos
        iterations for 
        assembly of Krylov subspaces. A vector- and matrix-free
        reverse communication interface allows the use of most general data
        structures, such as those arising after discretization of function space
        problems.
        The hard case of the trust region
        problem frequently arises in sequential methods for nonlinear
        optimization.  In this implementation, we made an effort to fully
        address the hard case in an exact way by considering all invariant
        Krylov subspaces.

        We investigate the numerical performance of \texttt{trlib} on the full
        subset of unconstrained problems of the \texttt{CUTEst} benchmark set.
        In addition to this, interfacing the PDE discretization toolkit
        \texttt{FEniCS} with \texttt{trlib} using the vector-free reverse
        communication interface is demonstrated for a family of PDE-constrained
        control trust region problems adapted from the \texttt{OPTPDE} collection.

        \textbf{Keywords: } trust-region subproblem, iterative method, Krylov subspace method, PDE constrained optimization

        \emph{AMS subject classification. } 35Q90, 65K05, 90C20, 90C30, 97N90
    \end{abstract}

    \section{Introduction}

In this article, we are concerned with solving the trust region problem, as it
frequently arises as a subproblem in sequential algorithms for nonlinear
optimization.

For this, let $ \mathcal H $ denote a Hilbert space with inner product $ \langle
\cdot, \cdot \rangle $ and norm $ \Vert \cdot \Vert $. Then, $ H: \mathcal H \to
\mathcal H $ denotes a self-adjoint, bounded operator on $\mathcal H$.
\revise{We assume that $H$ has compact negative part, which implies sequential weak lower semicontinuity of the mapping $ x \mapsto \langle x, Hx \rangle $, cf.~\cite{Hestenes1951} for details and a motivation}. \revise{In particular, we assume that self-adjoint, bounded operators $ P$ and $K $ exist  on $ \mathcal H $, such that $ H = P - K $, that $ K $ is compact, and that $ \langle x, Px \rangle \ge 0 $ for all $ x \in \mathcal H $}.
The operator $ M: \mathcal H \to \mathcal H $ is 
self-adjoint, bounded and coercive such that it induces an inner product  $ \langle
\cdot, \cdot \rangle_M $ with corresponding norm $ \Vert \cdot \Vert_M $ via $
\langle x, y \rangle_M := \langle x, M y \rangle $ and $ \Vert x \Vert_M :=
\sqrt{ \langle x, x \rangle_M } $.
Furthermore, let $ \mathcal X \subseteq \mathcal H $ be a closed subspace.

\label{def:tr}
The trust region subproblem we are interested in reads
\begin{align}
    \label{eq:tr}\tag{$\textup{TR}(H,g,M,\Delta,\mathcal X)$}
    \hspace*{-2cm}\left\{
        \quad\begin{array}{cl}\displaystyle
            \min_{x \in \mathcal H}\quad& \tfrac 12 \langle x, Hx \rangle + \langle x, g \rangle \\
            \textup{s.t.}\quad & \Vert x \Vert_M \le \Delta,\\
                               & x \in \mathcal X,
        \end{array}
    \right.
\end{align}
with $g\in\mathcal H$, objective function $ q(x) := \tfrac 12
\langle x, Hx \rangle + \langle x, g \rangle $, and trust region radius $ \Delta > 0 $.
Usually we take $ \mathcal X = \mathcal H $ but will also consider truncated versions where $ \mathcal X $ is a finite dimensional subspace of $ \mathcal H $.

Readers who are less comfortable with the function space setting may think of
$H$ as a symmetric positive definite matrix, of $\mathcal H$ as $\R^n$, and of
$M$ as the identity on $ \mathbb R^n $ inducing the standard scalar product and the euclidean norm
$\Vert \cdot\Vert_2$.
We follow the convention to indicate coordinate vectors $ \boldsymbol x \in \R^n $ with boldface letters.
\medskip

\subsubsection*{\bf Related Work}

Trust Region Subproblems are an important ingredient in modern optimization
algorithms as globalization mechanism. The monography \cite{Conn2000} provides
an exhaustive overview on Trust Region Methods for nonlinear programming, mainly
for problems formulated in finite-dimensional spaces. For trust region
algorithms in Hilbert spaces, we refer to
\cite{Kelley1987,Toint1988,Heinkenschloss1993,Ulbrich2000} and for Krylov
subspace methods in Hilbert space \cite{Guennel2014}.
In \cite{Absil2007}
applications of trust region subproblems formulated on Riemannian manifolds are
considered. Recently, trust region-like algorithms with guaranteed complexity
estimates in relation to the KKT tolerance have been proposed
\cite{Cartis2011,Cartis2011a,Curtis2016}. The necessary ingredients in the
subproblem solver for the algorithm investiged by Curtis and Samadi
\cite{Curtis2016} have been implemented in \texttt{trlib} as well.

Solution algorithms for trust region problems can be classified into direct
algorithms that make use of matrix factorizations and iterative methods that
access the operators $ H $ and $ M $ only via evaluations $ x \mapsto Hx $ and $
x \mapsto M x $ or $ x \mapsto M^{-1} x $. For the Hilbert space context, we
are interested in the latter class of algorithms. We refer to \cite{Conn2000}
and the references therein for a survey of direct algorithms, but point out the
algorithm of Mor\'{e} and Sorensen \cite{More1983} that will be used on a
specific tridiagonal subproblem, as well as the work of Gould et
al.~\cite{Gould2010}, who use higher order Taylor models to obtain high order
convergence results. The first iterative method was based on the conjugate
gradient process, and was proposed independently by Toint~\cite{Toint1981} and
Steihaug~\cite{Steihaug1983}. Gould et al.~\cite{Gould1999} proposed an
extension of the Steihaug-Toint algorithm. There, the Lanczos algorithm is used
to build up a nested sequence of Krylov spaces, and tri-diagonal trust region
subproblems are solved with a direct method. This idea also forms the basis for
our implementation. Hager \cite{Hager2001} considers an approach that builds on
solving the problem restricted to a sequence of subspaces that use SQP iterates
to accelerate and ensure quadratic convergence. Erway et
al.~\cite{Erway2009,Erway2010} investigate a method that also builds on a
sequence of subspaces built from accelerator directions satisfying optimality
conditions of a primal-dual interior point method.
\revise{In the methods of Steihaug-Toint and Gould, the operator $ M $ is used to
define the trust region norm and acts as preconditioner in the Krylov subspace algorithm.
The method of Erway et al.~allows to use a preconditioner that is independent
of the operator used for defining the trust region norm.
The trust region problem can equivalently be stated as generalized eigenvalue problem.
Approaches based on this characterization} are studied by Sorensen \cite{Sorensen1997}, Rendl and
Wolkowicz \cite{Rendl1997}, and Rojas et al.~\cite{Rojas2000,Rojas2008}.

\subsubsection*{\bf Contributions}

We introduce \texttt{trlib} which is a
new vector-free implementation of the GLTR \revise{(Generalized Lanczos Trust Region)} method for solving the trust region
subproblem. We assess the performance of this implementation on trust region
problems obtained from the set of unconstrained nonlinear minimization problems
of the CUTEst benchmark library, as well as on a number of examples formulated
in Hilbert space that arise from PDE-constrained optimal control.

\subsubsection*{\bf Structure of the Article} The remainder of this article is
structured as follows. In \S\ref{sec:ExUniq}, we briefly review conditions for
existence and uniqueness of minimizers. The GLTR methods for iteratively solving the 
trust region problem is presented in \S\ref{sec:GLTR} in detail. Our implementation,
\texttt{trlib} is introduced in \S\ref{sec:TRLIB}. Numerical results for
trust-region problems arising in nonlinear programming and in PDE-constrained
control are presented in \S\ref{sec:results}. Finally, we offer a summary and
conclusions in \S\ref{sec:conclusions}.

\section{Existence and Uniqueness of Minimizers}
\label{sec:ExUniq}

In this section, we briefly summarize the main results about existence and
uniqueness of solutions of the trust region subproblem. We first note that our
introductory setting implies the following fundamental properties:

\begin{lemma}[Properties of~\eqref{eq:tr}]~\\[-1em]
    \label{lem:fundamentals}
    \begin{enumerate}
        
        \item The mapping $ x \mapsto \langle x, H x \rangle $ is sequentially
        weakly lower semicontinuous, and Fr\'echet differentiable for every $ x
        \in \mathcal H $.
        
        \item The feasible set $\mathcal F := \{ x \in \mathcal H \, \vert \,
        \Vert x \Vert_M \le \Delta \} $ is bounded and weakly closed.
        
        \item The operator $ M $ is surjective.

    \end{enumerate}
    \begin{proof}
      $H \revise{{}= P - K}$ \revise{with compact $K$}, so (1) follows from \cite[Thm 8.2]{Hestenes1951}.
        Fr\'echet differentiability follows from boundedness of $H$. Boundedness of
        $\mathcal F$ follows from coercivity of $M$. Furthermore, $\mathcal F$ is
        obviously convex and strongly closed, hence weakly closed.     Finally, (3)
        follows by the Lax-Milgram theorem~\cite[ex. \revise{7}.19]{Clarke2013}\revise{:
          By boundedness of $ M $, there is $ C > 0 $ with $ \vert \langle x, M y \rangle \vert \le C \Vert x \Vert \, \Vert y \Vert $. The coercitivity assumption implies existence of $ c > 0 $ such that $ \langle x, Mx \rangle \ge c \Vert x \Vert^2 $ for $ x, y \in \mathcal H $.
         Then, $ M $ satisfies the assumptions of the Lax-Milgram theorem. Given $ z \in \mathcal H $, application of this theorem yields $ \xi \in \mathcal H $ with $ \langle x, M \xi \rangle = \langle x, z \rangle $ for all $ x \in \mathcal H $.
        Thus $ M \xi = z $.
        }
    \end{proof}
\end{lemma}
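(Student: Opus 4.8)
The plan is to dispatch the three assertions separately, each by a standard functional-analytic principle once the structural hypotheses are unpacked.

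For (1), I would use the assumed splitting $H = P - K$ with $\langle x, Px\rangle \ge 0$ and $K$ compact, writing $\langle x, Hx\rangle = \langle x, Px\rangle - \langle x, Kx\rangle$. The map $x \mapsto \langle x, Px\rangle$ is convex and norm-continuous, hence sequentially weakly lower semicontinuous. For the second term I would invoke that a compact operator sends weakly convergent sequences to norm-convergent ones: if $x_n \rightharpoonup x$ then $Kx_n \to Kx$ strongly, so $\langle x_n, Kx_n\rangle \to \langle x, Kx\rangle$, i.e.\ $x \mapsto -\langle x, Kx\rangle$ is even sequentially weakly continuous; a sum of a weakly lsc and a weakly continuous map is weakly lsc. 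This is precisely what is cited from \cite[Thm 8.2]{Hestenes1951}. Fréchet differentiability is then the elementary identity $\langle x+h, H(x+h)\rangle - \langle x, Hx\rangle = 2\langle Hx, h\rangle + \langle h, Hh\rangle$, where self-adjointness is used for the cross terms and $\vert \langle h, Hh\rangle \vert \le \Vert H\Vert\,\Vert h\Vert^2 = o(\Vert h\Vert)$; the derivative at $x$ is $h \mapsto 2\langle Hx, h\rangle$, and only boundedness of $H$ enters.

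For (2), coercivity of $M$ supplies $c > 0$ with $\Vert x\Vert_M^2 = \langle x, Mx\rangle \ge c\Vert x\Vert^2$, so $\mathcal F$ lies in the ball of radius $\Delta/\sqrt c$ and is bounded; combined with boundedness of $M$ this shows $\Vert\cdot\Vert_M$ is equivalent to $\Vert\cdot\Vert$, hence $\mathcal F$ is strongly closed, and being a norm-ball it is convex, so it is weakly closed by Mazur's lemma. For (3), I would apply the Lax--Milgram theorem to the bounded, coercive bilinear form $a(x,y) := \langle x, My\rangle$ (with $\vert a(x,y)\vert \le \Vert M\Vert\,\Vert x\Vert\,\Vert y\Vert$ and $a(x,x) \ge c\Vert x\Vert^2$): given $z \in \mathcal H$, applying the theorem to the functional $x \mapsto \langle x, z\rangle$ yields $\xi$ with $\langle x, M\xi\rangle = \langle x, z\rangle$ for all $x$, whence $M\xi = z$ and $M$ is surjective.

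The one step that genuinely needs the hypotheses rather than a boilerplate argument — and hence the main obstacle — is the weak lower semicontinuity in (1): it fails for a general bounded self-adjoint operator (take $H = -\mathrm{Id}$ on an infinite-dimensional $\mathcal H$), and the proof rests essentially on the decomposition $H = P - K$ with $P$ positive semidefinite and $K$ compact. The remaining items are direct consequences of coercivity, convexity, and Lax--Milgram, and require no further care.
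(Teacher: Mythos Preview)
Your proposal is correct and follows essentially the same route as the paper: the splitting $H=P-K$ with $P$ positive semidefinite and $K$ compact for weak lower semicontinuity (the paper simply cites Hestenes for this, whereas you spell out the argument), boundedness of $H$ for Fr\'echet differentiability, coercivity plus convexity/Mazur for (2), and Lax--Milgram applied to $a(x,y)=\langle x,My\rangle$ for (3). The only difference is the level of detail, not the strategy.
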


\begin{lemma}[Existence of a solution]~\\[.5em]
    Problem~\eqref{eq:tr} has a solution.
\end{lemma}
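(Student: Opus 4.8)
The plan is to use the direct method of the calculus of variations, exploiting the three structural facts collected in Lemma~\ref{lem:fundamentals}. First I would observe that the objective $q$ is bounded below on the feasible set $\mathcal{F}\cap\mathcal{X}$: since $\mathcal{F}$ is bounded (part~2 of Lemma~\ref{lem:fundamentals}, via coercivity of $M$), every feasible $x$ satisfies $\Vert x\Vert_M\le\Delta$, hence $\Vert x\Vert\le c^{-1/2}\Delta$ for the coercivity constant $c>0$; combined with boundedness of $H$ and the Cauchy--Schwarz bound on $\langle x,g\rangle$, this gives a finite lower bound for $q$ on the feasible set. Therefore $\mu:=\inf\{q(x)\mid x\in\mathcal{F}\cap\mathcal{X}\}$ is a real number, and I may pick a minimizing sequence $(x_k)\subseteq\mathcal{F}\cap\mathcal{X}$ with $q(x_k)\to\mu$.

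Next I would extract a weakly convergent subsequence. The sequence $(x_k)$ is bounded in the Hilbert space $\mathcal{H}$ (again since $\mathcal{F}$ is bounded), so by the Banach--Alaoglu / Eberlein--\v{S}muljan theorem there is a subsequence, still denoted $(x_k)$, and an element $x_\ast\in\mathcal{H}$ with $x_k\rightharpoonup x_\ast$. I then need the limit point to remain feasible: the set $\mathcal{F}$ is weakly closed (part~2 of Lemma~\ref{lem:fundamentals}, being convex and strongly closed), and $\mathcal{X}$ is a closed subspace, hence convex and strongly closed, hence weakly closed as well; so $x_\ast\in\mathcal{F}\cap\mathcal{X}$.

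Finally I would pass to the limit in the objective. The linear term $x\mapsto\langle x,g\rangle$ is weakly continuous, so $\langle x_k,g\rangle\to\langle x_\ast,g\rangle$. The quadratic term $x\mapsto\tfrac12\langle x,Hx\rangle$ is sequentially weakly lower semicontinuous by part~1 of Lemma~\ref{lem:fundamentals} (this is exactly where the assumption that $H$ has compact negative part enters). Adding these, $q$ is sequentially weakly lower semicontinuous, so
\[
    q(x_\ast)\le\liminf_{k\to\infty}q(x_k)=\mu.
\]
Since $x_\ast$ is feasible, the reverse inequality $q(x_\ast)\ge\mu$ holds by definition of the infimum, whence $q(x_\ast)=\mu$ and $x_\ast$ solves~\eqref{eq:tr}.

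The only genuinely nontrivial ingredient is the sequential weak lower semicontinuity of the quadratic form, which would fail for a general indefinite bounded self-adjoint $H$; here it is supplied ready-made by Lemma~\ref{lem:fundamentals}(1), so the argument reduces to the standard compactness-plus-lower-semicontinuity template and there is no real obstacle remaining. One minor point to handle carefully is that weak closedness of $\mathcal{X}$ must be invoked (not just of $\mathcal{F}$), which is immediate since closed subspaces are weakly closed.
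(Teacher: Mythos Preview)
Your proof is correct and follows essentially the same approach as the paper: both combine sequential weak lower semicontinuity of $q$ (Lemma~\ref{lem:fundamentals}(1)) with weak closedness and boundedness of the feasible set (Lemma~\ref{lem:fundamentals}(2)), the paper compressing the direct-method argument into a single citation of a generalized Weierstrass theorem while you spell out the minimizing-sequence details. Your explicit handling of the subspace constraint $\mathcal{X}$ (weakly closed because closed subspaces are convex and strongly closed) is a point the paper's short proof leaves implicit.
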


\begin{proof}
    By Lemma~\ref{lem:fundamentals}, the objecive functional $q$ is sequentially
    weakly lower semicontinuous and the feasible set  $\mathcal F$ is weakly
    closed and bounded, the claim follows then from a generalized Weierstrass Theorem \cite[Ch. 7]{Kurdila2005}.
\end{proof}

To present optimality conditions for the trust region subproblem, we first
present a helpful lemma on the change of the objective function between two
points on the trust region boundary.

\begin{lemma}[Objective Change on Trust Region Boundary]\label{lem:obj-change}~\\[.5em]
    Let $ x^0, x^1 \in \mathcal H $ with $ \Vert x^i \Vert_M = \Delta $ for $ i = 0, 1 $ be boundary points
    of~\eqref{eq:tr}, and let $ \lambda \ge 0 $ satisfy $ (H+\lambda M) x^0 + g = 0 $.
    Then $ d = x^1 - x^0 $ satisfies $ q(x^1) - q(x^0) = \tfrac 12 \langle d, (H+\lambda M) d \rangle $.
\end{lemma}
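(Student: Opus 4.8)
The plan is to expand $q(x^1)-q(x^0)$ around $x^0$ using the substitution $x^1 = x^0 + d$ and then eliminate the first-order term using the two available pieces of structure: the stationarity relation $(H+\lambda M)x^0 + g = 0$ and the fact that both points lie on the boundary sphere $\|x^i\|_M = \Delta$.

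First I would use self-adjointness of $H$ to write
\begin{align*}
    q(x^1) - q(x^0) &= \tfrac12\langle x^0+d, H(x^0+d)\rangle + \langle x^0+d, g\rangle - \tfrac12\langle x^0, Hx^0\rangle - \langle x^0, g\rangle\\
    &= \langle Hx^0 + g, d\rangle + \tfrac12\langle d, Hd\rangle.
\end{align*}
Next I would substitute the stationarity condition in the form $Hx^0 + g = -\lambda M x^0$ and use self-adjointness of $M$ to get $\langle Hx^0+g, d\rangle = -\lambda\langle x^0, Md\rangle = -\lambda\langle x^0, d\rangle_M$, so that $q(x^1)-q(x^0) = \tfrac12\langle d, Hd\rangle - \lambda\langle x^0, d\rangle_M$.

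Then I would exploit the boundary condition: expanding $\Delta^2 = \|x^1\|_M^2 = \|x^0\|_M^2 + 2\langle x^0, d\rangle_M + \|d\|_M^2 = \Delta^2 + 2\langle x^0, d\rangle_M + \|d\|_M^2$ yields $\langle x^0, d\rangle_M = -\tfrac12\|d\|_M^2$. Substituting this into the previous line gives $q(x^1)-q(x^0) = \tfrac12\langle d, Hd\rangle + \tfrac{\lambda}{2}\langle d, Md\rangle = \tfrac12\langle d, (H+\lambda M)d\rangle$, as claimed.

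This is essentially a bookkeeping argument, so I do not anticipate a genuine obstacle; the only points requiring a little care are invoking self-adjointness of $H$ and $M$ when collecting cross terms, and making sure the sign from the boundary expansion is tracked correctly so that the $\lambda$-term ends up with a $+$ rather than a $-$. No compactness or semicontinuity hypotheses from Lemma~\ref{lem:fundamentals} are needed here; the identity is purely algebraic given the stated assumptions on $x^0$, $x^1$, and $\lambda$.
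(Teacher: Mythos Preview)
Your proof is correct and follows essentially the same approach as the paper: both expand $q(x^1)-q(x^0)$ in terms of $d=x^1-x^0$, replace $Hx^0+g$ by $-\lambda Mx^0$ via the stationarity relation, and use $\Vert x^1\Vert_M^2-\Vert x^0\Vert_M^2=0$ to rewrite $\langle x^0,Md\rangle=-\tfrac12\langle d,Md\rangle$. The only difference is expository order; the paper states the boundary identity up front, whereas you derive it after introducing the first-order term.
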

\begin{proof}
    Using $ 0 = \Vert x^1 \Vert^2_M - \Vert x^0 \Vert^2_M = \langle x^0 + d, M (x^0 + d) \rangle - \langle x^0, M x^0 \rangle = \langle d, M d \rangle + 2 \langle x^0, M d \rangle $ and $ g = - (H+\lambda M) x^0 $ we find
    \begin{align*}
      q(x^1) - q(x^0) & = \tfrac 12 \langle d, Hd \rangle + \langle d, Hx^0 \rangle + \langle g, d \rangle = \tfrac 12 \langle d, Hd \rangle - \overbrace{ \lambda \langle x^0, Md \rangle}^{-\tfrac 12 \lambda \langle d, Md \rangle} \\
        & = \tfrac 12 \langle d, (H + \lambda M) d \rangle. \qedhere
    \end{align*}
\end{proof}

Necessary optimality conditions for the finite dimensional problem, see e.g.
\cite{Conn2000}, generalize in a natural way to the Hilbert space context.

\begin{theorem}[Necessary Optimality Conditions] \label{thm:noc}~\\[.5em]
    Let $ x^* \in \mathcal H $ be a global solution of~$(\textup{TR}(H,g,M,\Delta,\mathcal H))$. Then there is $
    \lambda^* \ge 0 $ such that
    \begin{enumerate}
        \renewcommand{\theenumi}{(\alph{enumi})}
        \item $ (H + \lambda^* M) x^* + g = 0 $,
        \item $ \Vert x^* \Vert_M - \Delta \le 0 $,
        \item $ \lambda^* ( \Vert x^* \Vert_M - \Delta ) = 0 $,
        \item $ \langle d, (H + \lambda^* M) d \rangle \ge 0 $ for all $ d \in \mathcal H $.
    \end{enumerate}
\end{theorem}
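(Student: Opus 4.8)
The plan is to split into two cases according to whether the minimizer is interior or on the trust region boundary, and in each case to read off conditions (a)--(c) from first-order information and to extract the global curvature condition (d) from global optimality.

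If $\Vert x^* \Vert_M < \Delta$, then $x^*$ is an unconstrained minimizer of $q$ on a neighbourhood of $x^*$, and everything holds with $\lambda^* = 0$: stationarity $q'(x^*) = H x^* + g = 0$ is (a); (b) and (c) are immediate; and since $q$ is quadratic with Hessian $H$, expanding $q(x^* + t d) = q(x^*) + \tfrac{t^2}{2}\langle d, H d\rangle$ for small $t$ and using $q(x^* + t d) \ge q(x^*)$ forces $\langle d, H d\rangle \ge 0$ for every $d \in \mathcal H$, which is (d).

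The substantive case is $\Vert x^* \Vert_M = \Delta$. First I would produce the multiplier: since $M$ is coercive and $\Delta > 0$ we have $x^* \ne 0$ and hence $M x^* \ne 0$, so the active constraint $x \mapsto \Vert x \Vert_M^2 - \Delta^2$ has non-vanishing Fréchet derivative $2 M x^*$ at $x^*$; $x^*$ is therefore a regular point and the Lagrange multiplier rule in Hilbert space yields $\lambda^* \in \R$ with $(H + \lambda^* M) x^* + g = 0$, i.e.\ (a), while (b) and (c) again hold trivially. To see that $\lambda^* \ge 0$, suppose $\lambda^* < 0$; then for $d := -M x^*$ one has $\langle q'(x^*), d\rangle = \lambda^* \Vert M x^* \Vert^2 < 0$ and $\tfrac{d}{dt}\Vert x^* + t d\Vert_M^2 \big|_{t=0} = -2 \Vert M x^* \Vert^2 < 0$, so for small $t > 0$ the point $x^* + t d$ is strictly feasible with strictly smaller objective, contradicting global optimality.

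It remains to prove the global condition (d), which I expect to be the main obstacle, since it must hold for \emph{all} $d \in \mathcal H$, not only for directions tangent to the trust region boundary. Here Lemma~\ref{lem:obj-change} does the decisive work: with $x^0 = x^*$ and any $x^1$ satisfying $\Vert x^1 \Vert_M = \Delta$ it gives $q(x^1) - q(x^*) = \tfrac12 \langle x^1 - x^*, (H + \lambda^* M)(x^1 - x^*)\rangle$, and the left-hand side is nonnegative because $x^1$ is feasible while $x^*$ is a global minimizer; hence $\langle d, (H + \lambda^* M) d\rangle \ge 0$ for every $d$ of the form $x^1 - x^*$ with $x^1$ on the boundary. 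For an arbitrary $d$ with $\langle M x^*, d\rangle \ne 0$, the scalar $t := -2\langle M x^*, d\rangle / \Vert d\Vert_M^2 \ne 0$ makes $x^* + t d$ a boundary point, so $d = t^{-1}(x^1 - x^*)$ and $\langle d, (H + \lambda^* M) d\rangle \ge 0$ follows after dividing by $t^2 > 0$. Finally, because $M x^* \ne 0$ the set $\{\, d \in \mathcal H : \langle M x^*, d\rangle \ne 0 \,\}$ is the complement of a closed hyperplane and therefore dense, and $d \mapsto \langle d, (H + \lambda^* M) d\rangle$ is continuous since $H + \lambda^* M$ is bounded; a continuous function nonnegative on a dense set is nonnegative everywhere, which establishes (d) in full.
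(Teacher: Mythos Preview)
Your proof is correct and follows essentially the same route as the paper's: both split into interior and boundary cases, invoke the Lagrange multiplier rule at a regular point for (a)--(c), and use Lemma~\ref{lem:obj-change} as the key device to obtain the global curvature condition (d) for directions $d$ with $\langle Mx^*, d\rangle \ne 0$. The only cosmetic difference is in handling directions with $\langle Mx^*, d\rangle = 0$: the paper perturbs such $d$ explicitly by $\tau p$ with $\langle p, Mx^*\rangle \ne 0$ and passes to the limit $\tau \to 0$, while you phrase the same closure step as ``density of the complement of a hyperplane plus continuity of a bounded quadratic form''; these are the same argument in different clothing.
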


\begin{proof}
    Let $ \sigma: \mathcal H \to \mathbb R, \sigma(x) := \langle x, M x \rangle
    - \Delta^2 $, so that the trust region constraint becomes $ \sigma(x) \le 0 $.
    The function $ \sigma $ is Fr\'echet-differentiable for all $ x \in \mathcal
    H $ with surjective differential provided $ x \neq 0 $ and satisfies constraint qualifications in that case.
    We may assume $ x^* \neq 0 $ as the theorem holds for $ x^* = 0 $ (then $g=0$) for elementary
    reasons.

    Now if $ x^* $ is a \revise{global} solution of~$(\textup{TR}(H,g,M,\Delta,\mathcal H))$, conditions (a)--(c) are necessary
    optimality conditions, cf.~\revise{\cite[Thm 9.1]{Clarke2013}}.
    
    \revise{To prove (d), we distinguish three cases:
        \begin{itemize}
            \item $ \Vert x \Vert_M = \Delta $ and $ d \in \mathcal H $ with $ \langle d, Mx^* \rangle \neq 0 $:
                Given such $ d $, there
                is $ \alpha \in \mathbb R \setminus \{0\} $ with $ \Vert x^* + \alpha d \Vert_M = \Delta
                $. Using Lemma~\ref{lem:obj-change} yields $ \langle d, (H+\lambda^* M) d \rangle
                = \tfrac 2{\alpha^2} ( q(x^* + \alpha d) - q(x^*) ) \ge 0 $ since $ x^* $ is a global solution.
            \item  $ \Vert x \Vert_M = \Delta $ and $ d \in \mathcal H $ with $ \langle d, Mx^* \rangle = 0 $:
                Since $ x^* \neq 0 $ and $ M $ is surjective, there is $ p \in H $ with $ \langle p, M x^* \rangle \neq 0 $,
                let $ d(\tau) := d + \tau p $ for $ \tau \in \mathbb R $. Then $ \langle d(\tau), M x^* \rangle \neq 0 $ for $ \tau \neq 0 $,
                by the previous case \begin{align*} 0 & \le \langle d(\tau), (H + \lambda^* M) d(\tau) \rangle \\ & = \langle d, (H + \lambda^* M) d \rangle + \tau \langle p, (H+\lambda^* M) d \rangle + \tau^2 \langle p, (H+\lambda^* M) p \rangle. \end{align*}
                Passing to the limit $ \tau \to 0 $ shows $ \langle d, (H+\lambda^* M) d \rangle \ge 0 $.
            \item $ \Vert x \Vert_M < \Delta $: Then $ \lambda^* = 0 $ by (c). Let $ d \in \mathcal H $ and consider $ x(\tau) = x^* + \tau d $, which is feasible for sufficiently small $ \tau $.
                By optimality and stationarity (a):
                \begin{align*}
                    0 \le q(x(\tau)) - q(x^*) = \tau \langle x^*, Hd \rangle + \tfrac{\tau^2}{2} \langle d, Hd \rangle + \tau \langle g, d \rangle = \tfrac{\tau^2}{2} \langle d, H d \rangle,
                \end{align*}
                thus $ \langle d, H d \rangle \ge 0 $. \qedhere
        \end{itemize}
    }
\end{proof}

\begin{corr}[Sufficient Optimality Condition]~\\[.5em]
    Let $ x^* \in \mathcal H $ and $ \lambda^* \ge 0 $
    such that (a)--(c) of Thm.~\ref{thm:noc} hold and  $ \langle d, (H + \lambda^*
    M) d \rangle > 0 $ holds for all $ d \in \mathcal H $. Then $ x^* $ is the
    unique global solution of~$(\textup{TR}(H,g,M,\Delta,\mathcal H))$.
\end{corr}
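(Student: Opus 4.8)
The plan is to prove global optimality and uniqueness in one stroke by showing that $q(x) > q(x^*)$ for every feasible $x \neq x^*$. Fix a feasible point $x$, put $d := x - x^*$, and expand the quadratic objective about $x^*$:
\begin{align*}
    q(x) - q(x^*) = \tfrac12 \langle d, Hd \rangle + \langle d, Hx^* + g \rangle.
\end{align*}
Using the stationarity condition (a) in the form $Hx^* + g = -\lambda^* M x^*$, the linear term becomes $-\lambda^* \langle d, Mx^* \rangle$, so that $q(x) - q(x^*) = \tfrac12 \langle d, Hd \rangle - \lambda^* \langle d, Mx^* \rangle$.

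Next I would isolate the curvature along $H + \lambda^* M$ by writing $\langle d, Hd \rangle = \langle d, (H+\lambda^* M) d \rangle - \lambda^* \langle d, Md \rangle$, and recombine the two resulting $M$-terms with the same boundary identity used in the proof of Lemma~\ref{lem:obj-change}, namely $\langle d, Md \rangle + 2\langle d, Mx^* \rangle = \Vert x \Vert_M^2 - \Vert x^* \Vert_M^2$. This gives the clean decomposition
\begin{align*}
    q(x) - q(x^*) = \tfrac12 \langle d, (H+\lambda^* M) d \rangle - \tfrac{\lambda^*}{2}\bigl( \Vert x \Vert_M^2 - \Vert x^* \Vert_M^2 \bigr).
\end{align*}

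It then remains to show the second term is nonnegative, which is where complementarity (c) enters: if $\lambda^* = 0$ the term vanishes, and if $\lambda^* > 0$ then (c) forces $\Vert x^* \Vert_M = \Delta$, while feasibility of $x$ gives $\Vert x \Vert_M \le \Delta$, so $-\tfrac{\lambda^*}{2}(\Vert x \Vert_M^2 - \Vert x^* \Vert_M^2) \ge 0$. Hence $q(x) - q(x^*) \ge \tfrac12 \langle d, (H+\lambda^* M) d \rangle$, which is strictly positive whenever $d \neq 0$ by the assumed strict positive definiteness of $\langle \cdot, (H+\lambda^* M)\cdot \rangle$. Therefore $x^*$ is a global minimizer, and no feasible $x \neq x^*$ can attain the minimum value.

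I do not expect a real obstacle here: the argument is a completing-the-square estimate, and the only care required is the algebraic bookkeeping between the $H$-curvature and the two $M$-inner-product terms, exactly as in Lemma~\ref{lem:obj-change}. One minor point worth noting is that the hypothesis ``$\langle d, (H+\lambda^* M) d \rangle > 0$ for all $d$'' is to be understood for all $d \neq 0$ (at $d=0$ it is trivially an equality), and it is precisely this strictness that upgrades the estimate to a strict inequality and thus yields uniqueness.
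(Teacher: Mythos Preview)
Your argument is correct and is essentially the explicit version of the paper's one-line proof, which simply invokes Lemma~\ref{lem:obj-change}: you carry out the same completing-the-square computation, obtaining $q(x)-q(x^*)=\tfrac12\langle d,(H+\lambda^* M)d\rangle - \tfrac{\lambda^*}{2}(\Vert x\Vert_M^2-\Vert x^*\Vert_M^2)$ and then using complementarity to dispose of the second term. If anything your write-up is slightly more complete, since Lemma~\ref{lem:obj-change} as stated only compares two boundary points, whereas you explicitly handle interior $x$ as well.
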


\begin{proof}
    This is an immediate consequence of Lemma~\ref{lem:obj-change}.
\end{proof}

    \section{The GLTR Method}
\label{sec:GLTR}

The GLTR (Generalized Lanczos Trust Region) method is an iterative method to
approximatively solve $(\textup{TR}(H,g,M,\Delta,\mathcal H))$ and has first been described in Gould et al.
\cite{Gould1999}. Our presentation follows the presentation there and only
deviates in minor details.

In every iteration of the GLTR process, problem~$(\textup{TR}(H,g,M,\Delta,\mathcal H))$ is restricted
to the Krylov subspace $ \mathcal K_i := \text{span}\{ (M^{-1}H)^j M^{-1}g \, \vert \, 0 \le j \le i \} $,
\begin{align}
    \label{eq:tr_krylovi}\tag{$\textup{TR}(H,g,M,\Delta,\mathcal K_i)$}
	\hspace*{-3cm}\left\{
		\quad\begin{array}{cl}\displaystyle
      \min_{x \in \mathcal H}\quad& \tfrac 12 \langle x, Hx \rangle + \langle x, g \rangle \\
      \textup{s.t.}\quad & \Vert x \Vert_M \le \Delta,\\
                         & x \in \mathcal K_i.
		\end{array}
	\right.
\end{align}

The following Lemma relates solutions of~\eqref{eq:tr_krylovi} to those of~$(\textup{TR}(H,g,M,\Delta,\mathcal H))$.

\begin{lemma}[Solution of the Krylov subspace trust region problem]~\\[.5em]
Let $ x^i $ be a global minimizer of~\eqref{eq:tr_krylovi} and $ \lambda^i $ the corresponding Lagrange multiplier. 
Then $ (x^i, \lambda^i) $ satisfies the global optimality conditions of $(\textup{TR}(H,g,M,\Delta,\mathcal H))$ (Thm.~\ref{thm:noc}) in the following sense:
\begin{enumerate}
    \renewcommand{\theenumi}{(\alph{enumi})}
    \item $ (H + \lambda^i M) x^i + g \, \perp_M \, \mathcal K_i $,
    \item $ \Vert x^i \Vert_M - \Delta \le 0 $,
    \item $ \lambda^i ( \Vert x^i \Vert_M - \Delta ) = 0 $,
    \item $ \langle d, (H + \lambda^i M) d \rangle \ge 0 $ for all $ d \in \mathcal K_i $.
\end{enumerate}
\end{lemma}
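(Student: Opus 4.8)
The plan is to observe that $(\textup{TR}(H,g,M,\Delta,\mathcal K_i))$ is itself a trust region problem of the type treated in Theorem~\ref{thm:noc}, but posed on the finite-dimensional Hilbert space $\mathcal K_i$, and then to apply that theorem directly and read off the four conditions. Concretely, I would equip $\mathcal K_i$ with the restriction of $\langle\cdot,\cdot\rangle_M$, write $\iota\colon\mathcal K_i\hookrightarrow\mathcal H$ for the inclusion and $\Pi\colon\mathcal H\to\mathcal K_i$ for the $\langle\cdot,\cdot\rangle_M$-orthogonal projection, and set $\hat H:=\Pi M^{-1}H\iota$ and $\hat g:=\Pi M^{-1}g$. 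For $x,d\in\mathcal K_i$ one checks $\langle x,Hx\rangle=\langle x,\hat H x\rangle_M$, $\langle x,g\rangle=\langle x,\hat g\rangle_M$ and $\langle d,(H+\lambda M)d\rangle=\langle d,(\hat H+\lambda\,\mathrm{Id})d\rangle_M$, so the restricted problem is exactly the trust region problem on $\mathcal K_i$ with Hessian $\hat H$, linear part $\hat g$, metric operator the identity, and radius $\Delta$.

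Next I would verify that this compressed problem satisfies the standing hypotheses behind Theorem~\ref{thm:noc}: $\hat H$ is self-adjoint and bounded as a compression of a self-adjoint bounded operator, and it has compact negative part for the trivial reason that every operator on the finite-dimensional $\mathcal K_i$ is compact (take $\hat P=0$, $\hat K=-\hat H$); the identity is self-adjoint, bounded and coercive; and $\hat g\in\mathcal K_i$. Applying Theorem~\ref{thm:noc} to the given minimizer $x^i$ therefore produces a multiplier $\lambda^i\ge0$ (the Lagrange multiplier referred to in the statement) together with conditions (a)--(d) for the compressed data.

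The last step is to transport these conditions back to $\mathcal H$. Conditions (b) and (c) are literally unchanged, since the native norm on $\mathcal K_i$ is $\Vert\cdot\Vert_M$. Condition (d) transfers through the identity $\langle d,(\hat H+\lambda^i\,\mathrm{Id})d\rangle_M=\langle d,(H+\lambda^i M)d\rangle$ for $d\in\mathcal K_i$. Finally, the compressed stationarity equation $(\hat H+\lambda^i\,\mathrm{Id})x^i+\hat g=0$ unfolds, using $\lambda^i x^i=\Pi M^{-1}(\lambda^i Mx^i)$, into $\Pi M^{-1}\big((H+\lambda^i M)x^i+g\big)=0$, i.e.\ the Galerkin/stationarity condition that the $M$-preconditioned residual of $x^i$ is $\langle\cdot,\cdot\rangle_M$-orthogonal to $\mathcal K_i$; this is the content of condition (a).

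The step that needs the most care is precisely the inner-product bookkeeping around condition (a): the residual $(H+\lambda^i M)x^i+g$ does not lie in $\mathcal K_i$, so (a) can only be an orthogonality statement, and one has to be consistent about whether it is the residual or the preconditioned residual $M^{-1}\big((H+\lambda^i M)x^i+g\big)$ that is declared orthogonal, and with respect to which of $\langle\cdot,\cdot\rangle$ and $\langle\cdot,\cdot\rangle_M$. The only other point worth a remark is that the constraint qualification invoked inside the proof of Theorem~\ref{thm:noc} survives the compression: surjectivity of the metric operator is immediate for a coercive operator on a finite-dimensional space, and the degenerate case $x^i=0$ (then $\hat g=0$) is handled exactly as there.
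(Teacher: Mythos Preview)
Your proposal is correct and follows essentially the same approach as the paper: apply Theorem~\ref{thm:noc} to the problem viewed on the Hilbert space $\mathcal K_i$ and read off (b)--(d) directly, with (a) being the subspace stationarity condition rewritten as an orthogonality of the residual. The paper's proof is a terse two-liner that omits the operator compression and transport you spell out; your careful inner-product bookkeeping around (a)---noting that what one actually obtains is $\langle (H+\lambda^i M)x^i+g,\,k\rangle=0$ for $k\in\mathcal K_i$, equivalently $M^{-1}$-preconditioned residual $\perp_M \mathcal K_i$---is a genuine clarification of a point the paper leaves ambiguous.
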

\begin{proof}
(b)--(d) are immediately obtained from Thm.~\ref{thm:noc} as $ \mathcal K_i \subseteq \mathcal H $ is a Hilbert space.
Assertion (a) follows from 
$x^\ast=x^i+x^\perp$ with $x^i\in\mathcal K_i$, $x^\perp\perp\mathcal K_i$ and
Thm.~\ref{thm:noc} for $x^i$.
\end{proof}
Solving problem~$(\textup{TR}(H,g,M,\Delta,\mathcal H))$ may thus be achieved by iterating the following
Krylov subspace process. Each iteration requires the solution of an instance of the
truncated trust region subproblem~\eqref{eq:tr_krylovi}.
\begin{algorithm}
    \DontPrintSemicolon
    \SetKwInOut{Input}{input}
    \SetKwInOut{Output}{output}
    \SetKw{KwReturn}{return}
    \Input{$H$, $M$, $g$, $\Delta$, \emph{\texttt{tol}}}
    \Output{$i$, $x^i$, $\lambda^i$}
    \BlankLine  
    \For{$i\geq 0$}{
      {Construct a basis for the $i$-th Krylov subspace $ \mathcal K_i $}\;
      {Compute a representation of $q(x)$ restricted to $ \mathcal K_i $}\;
      {Solve the subproblem~\eqref{eq:tr_krylovi} to obtain $(x^i,\lambda^i)$}\;    
      \lIf{$\Vert (H + \lambda^i M) x^i + g \Vert_{M^{-1}}\leq\texttt{tol}$}{\KwReturn}
    }
    \caption{Krylov subspace process for solving~$(\textup{TR}(H,g,M,\Delta,\mathcal H))$.}
    \label{alg:KrylovTR}
\end{algorithm}

Algorithm~\ref{alg:KrylovTR} stops the subspace iteration as soon as $ \Vert (H
+ \lambda^i M) x^i + g \Vert_{M^{-1}} $ is small enough.
The norm $ \Vert \cdot \Vert_{M^{-1}} $ is used in the termination criterion since it is the norm belonging to the dual of $ (\mathcal H, \Vert \cdot \Vert_M) $ and the Lagrange \revise{derivative representation} $ (H + \lambda^i M) x^i + g $ should be regarded as element of the dual.

\subsection{Krylov Subspace Buildup}

In this section, we present the preconditioned conjugate gradient (pCG) process
and the preconditioned Lanczos process (pL) for construction of Krylov subspace
bases. We discuss the transition from pCG to pL upon breakdown of the pCG process.

\subsubsection{Preconditioned Conjugate Gradient Process}

An $ H $-conjugate basis $(\hat p_j)_{0 \le j \le i} $ of $ \mathcal K_i $ may
be obtained using preconditioned conjugate gradient (pCG) iterations,
Algorithm~\ref{alg:pCG}.

\begin{algorithm}
    \DontPrintSemicolon
    \SetKwInOut{Input}{input}
    \SetKwInOut{Output}{output}
    \Input{$H$, $M$, $g^0$, $i\in\mathbb N$}
    \Output{$v^i$, $g^i$, $p^i$, $\alpha^{i-1}$, $\beta^{i-1}$}
    \BlankLine  
    {Initialize $\hat v^0 \gets M^{-1}g^0$, $\hat p^0 \gets -\hat v^0$}\;
    \For{$j\gets 0$ \KwTo $i-1$}{
      {$\alpha^j \gets {\langle \hat g^j, \hat v^j \rangle}/{\langle \hat p^j, H \hat p^j \rangle} = {\Vert \hat v^j \Vert_M}/{\langle \hat p^j, H \hat p^j \rangle}$}\;
      {$\hat g^{j+1}\gets \hat g^j + \alpha^j H \hat p_j$}\;
      {$\hat v^{j+1} \gets M^{-1} \hat g^{j+1}$}\;
      {$\beta^j \gets {\langle \hat g^{j+1}, \hat v^{j+1} \rangle}/{\langle \hat g^j, \hat v^j \rangle} = {\Vert \hat v^{j+1} \Vert_M^2}/{\Vert \hat v^j \Vert_M^2}$}\; 
      {$\hat p^{j+1} \gets - \hat v^{j+1} + \beta^j \hat p^j$}\;
    }
    \caption{Preconditioned conjugate gradient (pCG) process.}
    \label{alg:pCG}
\end{algorithm}

The stationary point $ s^i $ of $ q(x) $ restricted to the Krylov subspace $ \mathcal K_i $ is given by $ s^i = \sum_{j=0}^i \alpha^j \hat p^j $ and can thus be  computed using the recurrence 
\begin{align*}
  s^0 \gets \alpha^0 \hat p^0,\quad s^{j+1} \gets s^j + \alpha^{j+1} \hat p^{j+1},\ 0\leq j \leq N-1
\end{align*}
as an extension of Algorithm~\ref{alg:pCG}.
The iterates' $M$-norms $ \Vert s^i \Vert_M $ are monotonically increasing \cite[Thm 2.1]{Steihaug1983}.
Hence, as long as $ H $ is coercive on the subspace $ \mathcal K_i $ (this implies $ \alpha_j > 0 $ for $ 0 \le j \le i $) and $ \Vert s^i \Vert_M \le \Delta $, the solution to \eqref{eq:tr_krylovi} is directly given by $ s^i $.
Breakdown of the pCG process occurs if $ \alpha^i = 0 $.
In computational practice, if the criterion $|\alpha^i|\leq \varepsilon$ is violated, where $ \varepsilon \ge 0 $ is a suitable small tolerance,
it is possible -- and necessary -- to continue with Lanczos iterations, described next.

\subsubsection{Preconditioned Lanczos Process}

An $M$-orthogonal basis $ (p_j)_{0 \le j \le i} $ of $ \mathcal K_i $ may be obtained using the preconditioned Lanczos (pL) process, Algorithm~\ref{alg:PLCZ}, and permits to continue subspace iterations even after pCG breakdown.

\begin{algorithm}
    \DontPrintSemicolon
    \SetKwInOut{Input}{input}
    \SetKwInOut{Output}{output}
    \Input{$H$, $M$, $g^0$, $j\in\mathbb N$}
    \Output{$v^i$, $g^i$, $p^{i-1}$, $\gamma^{i-1}$, $\delta^{i-1}$}
    \BlankLine  
    {Initialize $ g^{-1} \gets 0$, $\gamma^{-1} \gets 1$, $v^0 \gets M^{-1}g^0$, $p^0 \gets v^0 $}\;
    \For{$i\gets 0$ \KwTo $j-1$}{
      {$\gamma^j \gets \sqrt{ \langle g^j, v^j \rangle } = \Vert g^j \Vert_{M^{-1}} = \Vert v^j \Vert_M$}\;
      {$p^j \gets  (1/{\gamma^j}) v^j =  (1/{\Vert v^j \Vert_M}) v^j$}\;
      {$\delta^j\gets \langle p^j, H p^j \rangle$}\;
      {$g^{j+1} \gets Hp^j - ({\delta^j}/{\gamma^j}) g^j - ({\gamma^j}/{\gamma^{j-1}}) g^{j-1}$}\;
      {$v^{j+1} \gets M^{-1} g^{j+1}$}
    }
    \caption{Preconditioned Lanczos (pL) process.}
    \label{alg:PLCZ}
\end{algorithm}

The following simple relationship holds between the Lanczos iteration data and the pCG iteration data, and may be used to initialize the
pL process from the final pCG iterate before breakdown:
\begin{alignat*}{3}
    \gamma^i & = \begin{cases} \Vert \hat v^0 \Vert_M, & i = 0 \\ {\sqrt{\beta^{i-1}}}/{\vert \alpha^{i-1} \vert}, & i \ge 1 \end{cases}, \qquad&
    \delta^i & = \begin{cases}  1/{\alpha^{0}}, & i = 0 \\  1/{\alpha^{i}} +  {\beta^{i-1}}/{\alpha^{i}}, & i \ge 1 \end{cases}, \\
    p^i & = 1/{\Vert \hat v_i \Vert_M}{\left[ \prod_{j=0}^{i-1} (- \text{sign} \, \alpha^{j})\right ]} \, \hat v_i, \qquad&
    g^i & = \gamma^j/{\Vert \hat v_i \Vert_M} \left[ \prod_{j=0}^{i-1}(- \text{sign} \, \alpha^{j}) \right] \, \hat g_i.
\end{alignat*}
In turn, breakdown of the pL process occurs if an invariant subspace of $H$ is
exhausted, in which case $ \gamma^i = 0 $. If this subspace does not span
$ \mathcal H $, the pL process may be restarted if provided with a vector $g^0$ that is $ M
$-orthogonal to the exhausted subspace.

The pL process may also be expressed in compact matrix form as 
\begin{align*}
    H P_i - M P_i T_i = g^{i+1} \boldsymbol{e_{i+1}}^T,\ \langle P_i, M P_i \rangle = I ,
\end{align*} 
with $ P_i $ being the matrix composed from columns $ p_0, \ldots, p_i $, and $
T_i $ the symmetric tridiagonal matrix with diagonal elements $ \delta^0,
\ldots, \delta^i $ and off-diagonal elements $ \gamma^1, \ldots, \gamma^i $.

As $ P_i $ is a basis for $ \mathcal K_i $, every $ x \in \mathcal K_i $ can be written as $ x =
P_i \boldsymbol h $ with a coordinate vector $ \boldsymbol h \in \mathbb R^{i+1} $. Using the compact form
of the Lanczos iteration, one can immediately express the quadratic form in this
basis as $ q(x) = \tfrac 12 \langle \boldsymbol h, T_i \boldsymbol h \rangle + \gamma^0 \langle \boldsymbol{e_1}, \boldsymbol h \rangle
$. Similarly, $ \Vert x \Vert_M = \Vert \boldsymbol h \Vert_2 $. Solving
\eqref{eq:tr_krylovi} thus reduces to solving $\textup{TR}(T_i,\gamma^0 \boldsymbol{e_1},I,\Delta,\mathbb R^{i+1})$ on
$ \mathbb R^{i+1} $ and recovering $ x = P_i \boldsymbol h $.

\subsection{Easy and Hard case of the Tridiagonal Subproblem}

As just described, using the tridiagonal representation $T_i$ of $H$ on the
basis $P_i$ of the $i$-th iteration of the pL process, the trust-region
subproblem $ \text{TR}(T_i, \gamma^0 \boldsymbol{e_1}, I, \Delta, \mathbb R^{i+1}) $ needs to be solved. For
notational convenience, we drop the iteration index $ i $ from $ T_i $ in the
following. Considering the necessary optimality conditions of
Thm.~\ref{thm:noc},  it is natural to define the mapping
\begin{align*}
    \lambda \mapsto \boldsymbol{x}(\lambda) := (T+\lambda I)^{+} (-\gamma^0 \boldsymbol{e_1}) \textup{ for }
  \lambda \in I := [\max\{0, -\theta_{\min} \}, \infty), 
\end{align*} 
where $ \theta_{\min} $ denotes the smallest eigenvalue of $ T $, and the superscript $+$ denotes the Moore-Penrose pseudo-inverse.
On $ I $, $ T + \lambda I $ is positive semidefinite. The following definition
relates $\boldsymbol{x}(\lambda^\ast)$ to a global minimizer $ (\boldsymbol{x^*}, \lambda^*) $ of $\textup{TR}(T_i,\gamma^0 \boldsymbol{e_1},I,\Delta, \mathbb R^{i+1})$.

\begin{definition}[Easy Case and Hard Case]~\\[.5em] Let $ (\boldsymbol{x^*}, \lambda^*) $
satisfy the necessary optimality conditions of Thm.~\ref{thm:noc}.\\ If $
\langle \gamma^0 \boldsymbol{e_1}, \textup{Eig}(\theta_{\min}) \rangle \neq 0 $, we say that
$T$ satisfies the \emph{easy case}. Then,  $ \boldsymbol{x^*} = \boldsymbol{x}(\lambda^*) $.\\ If $
\langle \gamma^0 \boldsymbol{e_1}, \textup{Eig}(\theta_{\min}) \rangle = 0 $, we say that $T$
satisfies the \emph{hard case}. Then, $ \boldsymbol{x^*} = \boldsymbol{x}(\lambda^*) + \boldsymbol{v} $ with suitable $
\boldsymbol v \in \textup{Eig}(\theta_{\min}) $.
Here $ \textup{Eig}(\theta) = \{ \boldsymbol v \in \mathbb R^{i+1} \, \vert \, T \boldsymbol v = \theta \boldsymbol v \} $ denotes the eigenspace of $ T $ associated to $ \theta $.
\end{definition}

With the following theorem, Gould et al.~in \cite{Gould1999} use the the
irreducible components of $ T $ to give a full description of the solution
$x(\lambda^*) + v$ in the hard case.
\begin{theorem}[Global Minimizer in the Hard Case]~\\[.5em]
\label{thm:hard-case-min}
    Let $ T = \textup{diag}(R_1, \ldots, R_k) $ with irreducible tridiagonal matrices $ R_j $ and let $ 1 \le \ell \le k $ be the smallest index for which $ \theta_{\min}(R_\ell) = \theta_{\min}(T) $ holds.
    Further, let  $ \boldsymbol{x_1}(\theta) = (R_1 + \theta I)^+(- \gamma^0 \boldsymbol{e_1}) $ and let $ (\boldsymbol{x_1^*}, \lambda_1^*) $ be a KKT-tuple corresponding to a global minimum of $ \textup{TR}(R_1, \gamma^0 \boldsymbol{e_1}, I, \Delta, \mathbb R^{r_1}) $, $ \boldsymbol{x_1^*} = \boldsymbol{x_1}(\lambda_1^*) $.\\[.5em] 
    If $ \lambda_1^* \ge -\theta_{\min} $, then $ x^* = (\boldsymbol{x_1}(\lambda_1^*)^T,\ \boldsymbol{0},\ \ldots,\ \boldsymbol{0})^T$ satisfies Thm.~\ref{thm:noc} for $ \text{TR}(T, \gamma^0 \boldsymbol{e_1}, I, \Delta, \mathbb R^{i+1}) $.\\[.5em]
    If $ \lambda_1^* <   -\theta_{\min} $, then $ x^* = (\boldsymbol{x_1}(-\theta_{\min})^T,\ \boldsymbol 0,\ \ldots,\ \boldsymbol 0,\ \boldsymbol v^T,\boldsymbol 0,\ \ldots,\ \boldsymbol 0)^T$, with $v \in \textup{Eig}(R_\ell, \theta_{\min}) $ such that $ \Vert \boldsymbol{x^*} \Vert^2_2 = \Vert \boldsymbol{x_1}(-\theta_{\min}) \Vert^2_2 + \Vert \boldsymbol v \Vert^2_2 = \Delta^2 $ satisfies Thm.~\ref{thm:noc} for $ \text{TR}(T, \gamma^0 \boldsymbol{e_1}, I, \Delta, \mathbb R^{i+1}) $.
\end{theorem}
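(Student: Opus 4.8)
The plan is to verify the four KKT conditions of Theorem~\ref{thm:noc} directly for the candidate point $\boldsymbol{x^*}$ in each of the two cases, using the block-diagonal structure $T = \textup{diag}(R_1,\ldots,R_k)$ together with the fact that the linear term $\gamma^0\boldsymbol{e_1}$ is supported entirely in the first block. The multiplier for the big problem will be $\lambda^* := \max\{\lambda_1^*, -\theta_{\min}\}$, i.e.\ $\lambda^* = \lambda_1^*$ in the first case and $\lambda^* = -\theta_{\min}$ in the second. The key structural observation, to be recorded first, is that because only the first block sees the right-hand side, the stationarity equation $(T+\lambda^* I)\boldsymbol{x^*} = -\gamma^0\boldsymbol{e_1}$ decouples: the first block contributes $(R_1 + \lambda^* I)\boldsymbol{x_1} = -\gamma^0\boldsymbol{e_1}$, while every other block $j$ contributes $(R_j + \lambda^* I)\boldsymbol{x_j} = 0$, which forces $\boldsymbol{x_j} = 0$ unless $\lambda^*$ is exactly $-\theta_{\min}(R_j)$ and $\boldsymbol{x_j}$ lies in the corresponding eigenspace.

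First I would treat the case $\lambda_1^* \ge -\theta_{\min}$. Here $\lambda^* = \lambda_1^*$, and the candidate is $\boldsymbol{x^*} = (\boldsymbol{x_1}(\lambda_1^*)^T, \boldsymbol 0, \ldots, \boldsymbol 0)^T$. Condition (a) holds blockwise: the first block is exactly the stationarity condition for the sub-KKT-tuple $(\boldsymbol{x_1^*},\lambda_1^*)$, and the remaining blocks read $0 = 0$. Condition (b), $\Vert\boldsymbol{x^*}\Vert_2 = \Vert\boldsymbol{x_1^*}\Vert_2 \le \Delta$, and condition (c), $\lambda^*(\Vert\boldsymbol{x^*}\Vert_2 - \Delta) = \lambda_1^*(\Vert\boldsymbol{x_1^*}\Vert_2 - \Delta) = 0$, are inherited verbatim from the sub-problem KKT conditions. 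For condition (d) I must show $T + \lambda_1^* I \succeq 0$: it is block diagonal, the first block $R_1 + \lambda_1^* I \succeq 0$ by the sub-problem's condition (d), and for $j \ge 2$ we have $R_j + \lambda_1^* I \succeq R_j + (-\theta_{\min})I = R_j - \theta_{\min}(T)I \succeq R_j - \theta_{\min}(R_j) I \succeq 0$, using $\lambda_1^* \ge -\theta_{\min}$ and $\theta_{\min}(T) = \min_j \theta_{\min}(R_j) \le \theta_{\min}(R_j)$.

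Next I would handle the case $\lambda_1^* < -\theta_{\min}$, where $\lambda^* = -\theta_{\min}$. Now $R_1 + \lambda^* I$ is still positive semidefinite (since $-\theta_{\min} \ge \theta_{\min}(R_1)$ is false in general — but note here $\lambda^* = -\theta_{\min} > \lambda_1^*$, and $R_1 + \lambda_1^* I$ is only PSD when $\lambda_1^* \ge -\theta_{\min}(R_1)$; I need $-\theta_{\min}(T) \ge -\theta_{\min}(R_1)$, i.e.\ $\theta_{\min}(R_1) \ge \theta_{\min}(T)$, which is true), so $\boldsymbol{x_1}(-\theta_{\min}) = (R_1 - \theta_{\min}I)^+(-\gamma^0\boldsymbol{e_1})$ is well-defined and solves $(R_1 - \theta_{\min}I)\boldsymbol{x_1} = -\gamma^0\boldsymbol{e_1}$ (one must check $\gamma^0\boldsymbol{e_1} \perp \mathrm{Eig}(R_1,\theta_{\min})$; this is where the definition of $\ell$ and the "smallest index" hypothesis enter — if $\ell = 1$ then $\theta_{\min}(R_1) = \theta_{\min}(T)$ and one argues that $\lambda_1^* < -\theta_{\min}(R_1)$ together with $R_1 + \lambda_1^* I \succeq 0$ would be contradictory unless the first component of the critical eigenvector vanishes, i.e.\ the sub-problem itself is in its hard case; if $\ell > 1$ then $\theta_{\min}(R_1) > \theta_{\min}(T)$ so $R_1 - \theta_{\min}I \succ 0$ is invertible and there is nothing to check). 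The block $R_\ell$ carries the eigenvector correction $\boldsymbol v \in \mathrm{Eig}(R_\ell,\theta_{\min})$, which satisfies $(R_\ell - \theta_{\min}I)\boldsymbol v = 0$, so stationarity (a) holds in that block too; all other blocks $j \ne 1,\ell$ have $\boldsymbol{x_j} = 0$ and $(R_j - \theta_{\min}I)\cdot 0 = 0$. The norm $\Vert\boldsymbol{x^*}\Vert_2 = \Delta$ is arranged by the choice of $\Vert\boldsymbol v\Vert_2$ (feasibility of this choice, i.e.\ that $\Vert\boldsymbol{x_1}(-\theta_{\min})\Vert_2 \le \Delta$, follows because $\Vert\boldsymbol{x_1}(\lambda)\Vert_2$ is decreasing in $\lambda$ on the relevant interval and $\lambda_1^* < -\theta_{\min}$ with $\Vert\boldsymbol{x_1}(\lambda_1^*)\Vert_2 \le \Delta$ — or more simply $=\Delta$ since $\lambda_1^* > 0$ in the genuine hard case), giving (b) with equality, and (c) holds since either $\lambda^* = 0$ or the constraint is active. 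Finally (d): $T - \theta_{\min}I \succeq 0$ because each block $R_j - \theta_{\min}I \succeq R_j - \theta_{\min}(R_j)I \succeq 0$.

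The main obstacle I anticipate is the bookkeeping around the index $\ell$ and verifying the compatibility condition $\gamma^0\boldsymbol{e_1} \perp \mathrm{Eig}(R_1, \theta_{\min}(T))$ that makes $\boldsymbol{x_1}(-\theta_{\min})$ well-defined in the second case; everything else is a routine exploitation of block-diagonality and the already-established sub-problem KKT conditions. One clean way to dispatch it: if $R_1 + \lambda_1^* I \succeq 0$ and $\lambda_1^* < -\theta_{\min}(T) \le -\theta_{\min}(R_1) + (\theta_{\min}(R_1) - \theta_{\min}(T))$, then either $\theta_{\min}(R_1) > \theta_{\min}(T)$, so $R_1 - \theta_{\min}(T)I$ is nonsingular and $\boldsymbol{x_1}(-\theta_{\min})$ is simply its inverse applied to $-\gamma^0\boldsymbol{e_1}$; or $\theta_{\min}(R_1) = \theta_{\min}(T)$, i.e.\ $\ell = 1$, and then $\lambda_1^* < -\theta_{\min}(R_1)$ combined with $R_1 + \lambda_1^* I \succeq 0$ forces $R_1 + \lambda_1^* I$ to be singular precisely on $\mathrm{Eig}(R_1,\theta_{\min})$ with $\lambda_1^* = -\theta_{\min}(R_1)$ — contradicting strict inequality — unless the sub-problem is itself in the hard case, in which case $\gamma^0 \boldsymbol{e_1} \perp \mathrm{Eig}(R_1,\theta_{\min})$ by definition and the pseudo-inverse expression is valid. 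I would present this dichotomy explicitly and then collect the four verified conditions to conclude.
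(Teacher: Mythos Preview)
The paper does not supply its own proof of this theorem; it attributes the result to Gould, Lucidi, Roma, and Toint (1999) and merely states it. Your direct verification of the four KKT conditions via the block-diagonal structure of $T$ is the natural argument and is essentially correct.

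One simplification removes the ``main obstacle'' you anticipate. Your worry about the possibility $\ell = 1$ in the second case is unnecessary. From condition~(d) of the KKT tuple $(\boldsymbol{x_1^*},\lambda_1^*)$ for the $R_1$-subproblem you already have $R_1 + \lambda_1^* I \succeq 0$, i.e.\ $\lambda_1^* \ge -\theta_{\min}(R_1)$. Combined with the hypothesis $\lambda_1^* < -\theta_{\min}(T)$ of the second case, this forces $\theta_{\min}(R_1) > \theta_{\min}(T)$ strictly, hence $\ell \ge 2$ automatically. There is no ``hard-case-within-the-subproblem'' escape clause to consider (and indeed none is available: since $R_1$ is irreducible, the three-term recurrence shows every eigenvector of $R_1$ has nonzero first component, so $\gamma^0\boldsymbol{e_1}$ is never orthogonal to $\mathrm{Eig}(R_1,\theta_{\min}(R_1))$ and the $R_1$-subproblem is always in the easy case). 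With $\ell \ge 2$ established, $R_1 - \theta_{\min}(T) I$ is positive definite, $\boldsymbol{x_1}(-\theta_{\min})$ is simply $(R_1 - \theta_{\min}(T) I)^{-1}(-\gamma^0\boldsymbol{e_1})$, and the compatibility issue dissolves. The remainder of your plan---monotonicity of $\lambda \mapsto \|\boldsymbol{x_1}(\lambda)\|_2$ to guarantee $\|\boldsymbol{x_1}(-\theta_{\min})\|_2 \le \Delta$, and blockwise positive semidefiniteness for~(d)---goes through cleanly.
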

In particular, as long as $ T $ is irreducible, the hard case does not occur.
\revise{A symmetric tridiagonal matrix $ T $ is irreducible, if and only if all it's offdiagonal elements are non-zero.}
For the tridiagonal matrices arising from Krylov subspace iterations, this is
the case as long as the pL process does not break down.

\subsection{Solving the Tridiagonal Subproblem in the Easy Case}
\label{sec:easy-tri}

Assume that $ T $ is irreducible, and thus satisfies the easy case.. Solving the
tridiagonal subproblem amounts to checking whether the problem admits an
interior solution and, if not, to finding a value $ \lambda^* \ge \max\{0,
-\theta_{\min}\} $ with $ \Vert x(\lambda^*) \Vert = \Delta $.

We follow Mor\'e and Sorensen~\cite{More1983}, who define $ \sigma_p(\lambda) :=
\Vert \boldsymbol x(\lambda) \Vert^p - \Delta^p $ and propose the Newton iteration 
\begin{align*}
\lambda^{i+1}\gets \lambda^i-{\sigma_p(\lambda^i)}/{\sigma_p'(\lambda^i)}=
\lambda^i-\frac{\Vert \boldsymbol x(\lambda^i) \Vert^p - \Delta^p }{{p \Vert \boldsymbol x(\lambda^i) \Vert^{p-2} \langle \boldsymbol x(\lambda^i),
\boldsymbol x'(\lambda^i) \rangle}},\ i \geq 0,
\end{align*}
with $\boldsymbol x'(\lambda) = - (T+\lambda I)^{+} \boldsymbol x(\lambda) $, to find a root of $
\sigma_{-1}(\lambda) $.  Provided that the initial value $\lambda^0$ lies in the
interval $ [\max\{0, - \theta_{\min}\}, \lambda^*] $, such that $ (T + \lambda^0
I) $ is positive semidefinite, $ \Vert \boldsymbol x(\lambda^0) \Vert \ge \Delta $, and
 no safeguarding of the Newton iteration is necessary, it can be shown that
this leads to a sequence of iterates in the same interval that converges to $
\lambda^*$ at globally linear and locally quadratic rate,
cf.~\cite{Gould1999}.

Note that $ \lambda^* > - \theta_{\min} $ as $ \sigma_{-1}(\lambda) $ has a singularity in $ - \theta_{\min} $ but $ \sigma_{-1}(\lambda^*) = 1/\Delta $ and it thus suffices to consider $ \lambda > \max\{0, - \theta_{\min}\} $.

Both the function value and derivative require the solution of a linear system
of the form $ (T + \lambda I) \boldsymbol w = \boldsymbol b $. As $ T + \lambda I $ is tridiagonal,
symmetric positive definite, and of reasonably small dimension, it is
computationally feasible to use a tridiagonal Cholesky decomposition for this.

Gould et al.~in \cite{Gould2010} improve upon the convergence result by
considering higher order Taylor expansions of $ \sigma_p(\lambda) $ and values
$ p\neq-1 $ to obtain a method with locally quartic convergence.

\subsection{The Newton initializer}
\label{sec:newton-ini}

Cheap oracles for a suitable initial value $ \lambda^0 $ may be available,
including, for example, zero or the value $ \lambda^* $ of the previous
iteration of the pL process. If these fail, it becomes necessary to compute $
\theta_{\min} $. To this end, we follow Gould et al.~\cite{Gould1999} and
Parlett and Reid~\cite{Parlett1981}, who define the Parlett-Reid Last-Pivot
function $ d(\theta) $:
\begin{definition}[Parlett-Reid Last-Pivot Function]
\label{def:prlpf}
\begin{align*}
  d(\theta) := \begin{cases} d_{i},& \parbox{.7\textwidth}{$\text{if there exists } (d_0,\ldots,d_i) \in (0,\infty)^i \times \mathbb R$, and $ L $ unit lower triangular {such that}  $T - \theta I = L\, \textup{diag}(d_0, \ldots, d_i) \, L^T$} \\
        -\infty,& {otherwise.} \end{cases}
\end{align*}
\end{definition}
Since $ T $ is irreducible, its eigenvalues are simple \cite[Thm 8.5.1]{Golub1996} and $ \theta_{\min} $ is
given by the unique value $ \theta \in \mathbb R $ with $ T - \theta I $
singular and positive semidefinite, or, equivalently, $ d(\theta) = 0 $.

A safeguarded root-finding method is used to determine $ \theta_{\min} $ by
finding the root of $ d(\theta) $. An interval of safety $ [\theta^k_\ell,
\theta^k_\textup{u}]$ is used in each iteration and a guess $ \theta^k \in
[\theta^k_\ell, \theta^k_\textup{u}] $ is chosen. Gershgorin bounds may be used to
provide an initial interval \cite[Thm 7.2.1]{Golub1996}. Depending on the sign of $
d(\theta) $ the interval of safety is then contracted to $ [\theta^k_\ell,
\theta^k] $ if $ d(\theta^k) < 0 $ and to $ [\theta^k, \theta^k_\textup{u}] $ if
$ d(\theta^k) \ge 0 $ as the interval of safety for the next iteration. One choice
for $ \theta^k $ is bisection. Newton steps as previously described may be taken
advantage of if they remain inside the interval of safety.

\begin{figure}[ht]
  \centering
    \includegraphics{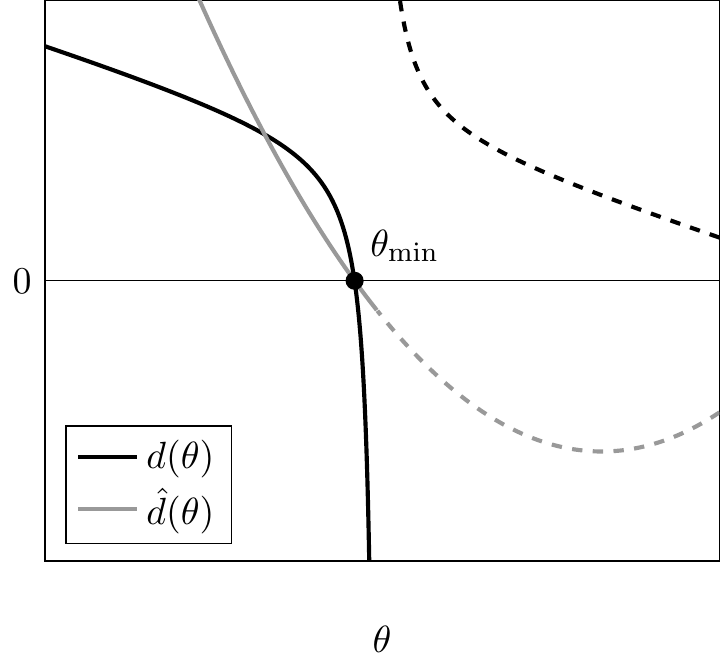}
    \caption{The Parlett-Reid last-pivot function $ d(\theta) $ and the lifted function $ \hat d(\theta) $ have the common zero $ \theta_{\min} $.
  Dashed lines show the analytic continuation of the right hand side of $ d(\theta) = \prod_j (\theta - \theta_j) / \prod_j (\theta - \hat \theta_j) $ into the region where $ d(\theta) = - \infty $.}
\end{figure}

For sucessive pL iterations, the fact that the tridiagonal matrices grow by one
column and row in each iteration may be exploited to save most of the
computational effort involved.  As noted by
Parlett and Reid \cite{Parlett1981}, the reccurence to compute the $d_i$ \revise{via Cholesky decomposition of $ T - \theta I $}
in~Def.~\ref{def:prlpf} is \revise{identical with the recurrence} that results from applying a Laplace expansion for \revise{the determinant of} tridiagonal matrices \cite[\S 2.1.4]{Golub1996}.
\revise{Comparing the recurrences thus} yields the explicit formula
\begin{align}
  \label{eq:d-recurrence}
    d(\theta) = \frac{\det(T - \theta I)}{\det(\hat T - \theta I)} 
    = \revise{-} \frac{\prod_{j} (\theta - \theta_j)}{\prod_{j} (\theta - \hat \theta_j)},
\end{align}
where $ \hat T $ denotes the principal submatrix of $ T $ obtained by erasing
the last column and row, and $\theta_j$ and $\hat\theta_j$ enumerate the
eigenvalues of $T$ and $\hat T$, respectively. \revise{The right hand side is obtained by identifying numerator and denominator with the characteristic polynomials of $ T $ and $ \hat T $, and by factorizing these.}

It becomes apparent that $ d(\theta) $ has a pole of first order in $ \hat
\theta_{\min} $. After lifting this pole, the function $ \hat d(\theta) :=
(\theta - \hat \theta_{\min}) d(\theta) $ is smooth on a larger interval. When
iteratively constructing the tridiagonal matrices in successive pL iterations,
the value $\hat \theta_{\min}$ is readily available and it becomes preferrable to use $
\hat d(\theta) $ instead of $d(\theta)$ for root finding.

\subsection{Solving the Tridiagonal Subproblem in the Hard Case}

If the hard case is present, the decomposition of $ T $ into irreducible components has to be determined.
This is given in a natural way by Lanczos breakdown.
Every time the Lanczos process breaks down and is restarted with a vector $ M $-orthogonal to the previously considered Krylov subspaces,
a new tridiagonal block is obtained.
Solving the problem in the hard case then amounts to applying Theorem~\ref{thm:hard-case-min}:
First all smallest eigenvalue $ \theta_i $ of the irreducible blocks $ R_i $ have to be determined as well as the KKT tuple $ (\boldsymbol{x_1^*}, \lambda_1^*) $ by solving the easy case for $ \text{TR}(R_1, \gamma^0 \boldsymbol{e_1}, I, \Delta, \mathbb R^{r_1}) $.
Again, let $ \ell $ be the smallest index $ i $ with minimial $ \theta_i $.
In the case $ \lambda_1^* \ge -\theta_{\ell} $, the global solution is given by $ x^* = ((\boldsymbol{x_1^*})^T,\ \boldsymbol 0,\ \ldots, \boldsymbol 0)^T $.
On the other hand if $ \lambda_1^* < - \theta_\ell $ the eigenspace of $ R_\ell $ corresponding to $ \theta_{\ell} $ has to be obtained.
As $ R_\ell $ is irreducible, all eigenvalues of $ R_\ell $ are simple and an eigenvector $ \boldsymbol{\tilde v} $ spanning the desired eigenspace can be obtained for example by inverse iteration \cite[\S 8.2.2]{Golub1996}.
The solution is now given by $ \boldsymbol{x^*} = (\boldsymbol{x_1(-\theta_{\ell})}^T,\ \boldsymbol 0,\ \boldsymbol v^T,\ \boldsymbol 0)^T $ with $ \boldsymbol{x_1}(-\theta_{\min}) = (R_1 - \theta_{\ell} I)^{-1}(-\gamma^0 \boldsymbol{e_1}) $ and $ \boldsymbol v := \alpha \boldsymbol{\tilde v} $ where $ \alpha $ has been chosen as the root of the scalar quadratic equation $ \Delta^2 = \Vert \boldsymbol{x_1}(-\theta_{\min}) \Vert^2 + \alpha^2 \Vert \boldsymbol{\tilde v} \Vert^2 $ that leads to the smaller objective value.

    \section{Implementation \texttt{trlib}}
\label{sec:TRLIB}
In this section, we present details of our implementation \texttt{trlib} of the
GLTR method. 

\subsection{Existing Implementation} 

The GLTR reference implementation is the
software package \texttt{GLTR} in the optimization library \texttt{GALAHAD}
\cite{GALAHAD}. This Fortran 90 implementation uses conjugate gradient
iterations exclusively to build up the Krylov subspace, and provides a reverse
communication interface that requires exchange vector data to be stored as 
contiguous arrays in memory.

\subsection{\texttt{trlib} Implementation} 

Our implementation is called \texttt{trlib}, short for \emph{trust region
library}.  It is written in plain ANSI C99 code, and has been made available as
open source \cite{Lenders2016}. We provide a reverse communication interface in
which only scalar data and requests for vector operations are exchanged,
allowing for great flexibility in applications.

Beside the stable and efficient conjugate gradient iteration we also implemented the Lanczos iteration and a crossover mechanism to expand the Krylov subspace, as we frequently found applications in the context of constrained optimization with an SLEQP algorithm \cite{Byrd2003,Lenders2015} where conjugate gradient iterations broke down whenever directions of tiny curvature have been encountered.

\subsection{Vector Free Reverse Communication Interface}

The implementation is built around a reverse communication calling paradigm.
To solve a trust region subproblem, the according library function has to be repeatedly called by the user and after each call the user has to perform a specific action indicated by the value of an output variable.
Only scalar data representing dot products and coefficients in \texttt{axpy} operations as well as integer and floating point workspace to hold data for the tridiagonal subproblems is passed between the user and the library.
In particular, all vector data has to be managed by the user, who must be able to compute dot products $ \langle x, y \rangle $, perform \texttt{axpy} $ y := \alpha x + y $ on them and implement operator vector products $ x \mapsto Hx, x \mapsto M^{-1} x $ with the Hessian and the preconditioner.

Thus no assumption about representation and storage of vectorial data is made, as well as no assumption on the discretization of $ \mathcal H $ if $ \mathcal H $ is not finite-dimensional.
This is beneficial in problems arising from optimization problems stated in function space that may not be stored naturally as contiguous vectors in memory or where adaptivity regarding the discretization may be used along the solution of the trust region subproblem.
It also gives a trivial mechanism for exploiting parallelism in vector operations as vector data may be stored and operations may be performed on GPU without any changes in the trust region library.

In particular, this interface allows for easy interfacing with the PDE-constrained optimization software \texttt{DOLFIN-adjoint} \cite{Farrell2013,Funke2013} within the finite element framework \texttt{FEniCS} \cite{Alnaes2015,Logg2010,Alnaes2014} without having to rely on assumptions how the finite element discretization is stored, see~\S\ref{sec:fenics}.

\subsection{Conjugate Gradient Breakdown}

Per default, conjugate gradient iterations are used to build the Krylov subspace.
The algorithm switches to Lanczos iterations if the magnitude of the curvature $ \vert \langle \hat p, H \hat p \rangle \vert \le \emph{\texttt{tol\_curvature}} $ with a user defined tolerance $ \emph{\texttt{tol\_curvature}} \ge 0 $.

\subsection{Easy Case}

In the easy case after the Krylov space has been assembled in a particular iteration it remains to solve $ (\textup{TR}(T_i, \gamma^0 \boldsymbol{e_1}, I, \Delta, \mathbb R^{i+1})) $ which we do as outlined in \S\ref{sec:easy-tri}.
As mentioned there, an improved convergence order can be obtained by higher order Taylor expansions of $ \sigma_p(\lambda) $ and values $ p \neq -1 $, see \cite{Gould2010}.
However in our cases the computational cost for solving the tridiagonal subproblem --- often warmstarted in a suitable way --- is negligible in comparison the the cost of computing matrix vector products $ x \mapsto Hx $ and thus we decided to stick to the simpler Newton rootfinding on $ \sigma_{-1}(\lambda) $.

To obtain a suitable initial value $ \lambda^0 $ for the Newton iteration, we first try $ \lambda^* $ obtained in the previous Krylov iteration if available and otherwise $ \lambda^0 = 0 $.
If these fail, we use $ \lambda^0 = - \theta_{\min} $ computed as outlined in \S\ref{sec:newton-ini} by zero-finding on $ d(\theta) $ or $ \hat d(\theta) $.
This requires suitable models for $ \hat d(\theta) $.
Gould et al.~\cite{Gould1999} propose to use a quadratic model $ \theta^2 + a
\theta + b $ for $ \hat d(\theta) $ that captures the asymptotics $ t \to -
\infty $ obtained by fitting function value and derivative in a point in the
root finding process.
We have also had good success  with the linear Newton
model $ a \theta + b $, and with using a second order quadratic model $ a
\theta^2 + b \theta + c $, that makes use of an additional second derivative, as
well.
Derivatives of $ d(\theta) $ or $ \hat d(\theta) $ are easily obtained
by differentiating the recurrence for the Cholesky decomposition.
In our implementation a heuristic is used to select the
option that is inside the interval of safety and promises good progress.
The heuristic is given by using $ \theta^2 + a \theta + b $ in case that the bracket width $ \theta^k_{\text u} - \theta^k_\ell $ satisfies $ \theta^k_{\text u} - \theta^k_\ell \ge 0.1 \max\{1, \vert \theta^k \vert \} $ and $ a \theta^2 + b \theta + c $ otherwise.
The motivation behind this is that in the former case it is not guaranteed, that $ \theta^k $ has been determined to high accuracy \revise{as zero of $ d(\theta) $} and thus the model that captures the global behaviour might be better suited. \revise{I}n the latter case, $ \theta^k $ \revise{has been confirmed to be a zero of $ d(\theta) $} to a certain accuracy and it is safe to \revise{use} the model representing local behaviour.

\subsection{Hard Case}

\revise{We now discuss the so-called hard case of the trust region problem, which we have found to be of critical importance for the performance of trust region subproblem solvers in general nonlinear nonconvex programming. We discuss algorithmic and numerical choices made in \texttt{trlib} that we have found to help improve performance and stability.}

\subsubsection{Exact Hard Case}

The function for the solution of the tridiagonal subproblem implements the algorithm as given by Theorem~\ref{thm:hard-case-min} if provided with a decomposition in irreducible blocks.

However, from local information it is not possible to distinguish between convergence to a global solution of the original problem and the case in which an invariant Krylov subspace is exhausted that may not contain the global minimizer as in both cases the gradient vanishes.

The handling of the hard case is thus left to the user who has to decide in the reverse calling scheme if once arrived at a point where the gradient norm is sufficiently small the solution in the Krylov subspaces investigated so far or further Krylov subspaces should be investigated.
In that case it is left to the user to determine a new nonzero initial vector for the Lanczos iteration that is $ M $-orthogonal to the previous Krylov subspaces.
\revise{One possibility to obtain such a vector is using a random vector and $ M $-orthogonalizing it with respect to the previous Lanczos directions using the modified Gram-Schmidt algorithm.}

\subsubsection{Near Hard Case}

The near hard case arises if $ \langle \gamma^0 \boldsymbol{e_1}, \frac{\boldsymbol{\tilde v}}{\Vert \boldsymbol{ \tilde v} \Vert} \rangle $ is tiny, where $ \boldsymbol{\tilde v} $ spans the eigenspace $\text{Eig}(\theta_{\min}) = \text{span}\{\boldsymbol{\tilde v}\} $.

Numerically this is detected if there is no $ \lambda \ge \max\{0, -\theta_{\min}\} $ such that $ \Vert \boldsymbol x(\lambda) \Vert \ge \Delta $ holds in floating point airthmetic. In that case we use the heuristic $ \lambda^* = - \theta_{\min} $ and $ x^* = x(-\theta_{\min}) + \alpha \boldsymbol v $ with $ \boldsymbol v \in \text{Eig}(\theta_{\min}) $ where $ \alpha $ is determined such that $ \Vert \boldsymbol{x^*} \Vert = \Delta $.

Another possibility would be to modify the tridiagonal matrix $ T $ by dropping offdiagonal elements below a specified treshold and work on the obtained decomposition into irreducible blocks.
However we have not investigated this possibility as the heuristic seems to deliver satisfactory results in practice.

\subsection{Reentry with New Trust Region Radius}

In nonlinear programming applications it is common that after a rejected step another closely related trust region subproblem has to be solved with the only changed data being the trust region radius. As this has no influence on the Krylov subspace but only on the solution of the tridiagonal subproblem, efficient hotstarting has been implemented.
Here the tridiagonal subproblem is solved again with exchanged radius and termination tested.
If this point does not satisfy the termination criterion, conjugate gradient or Lanczos iterations are resumed until convergence.
However, we rarely observed the need to resume the Krylov iterations in practice.

\revise{An explanation is offered based on the use of the convergence criterion $$ \Vert \nabla L \Vert_{M^{-1}} \le \emph{\texttt{tol}} $$ as follows: In the Krylov subspace $ \mathcal K_i $, $$ \Vert \nabla L \Vert_{M^{-1}} = \gamma^{i+1} \vert \langle \bm x(\lambda), \bm{e_{i+1}} \rangle \vert \le \gamma^{i+1} \Vert \bm x(\lambda) \Vert_2 = \gamma^{i+1} \Delta.$$ Convergence occurs thus if either $ \gamma^{i+1} $  or the last component of $ \bm x(\lambda) \le \Delta $ are small. Reducing the trust region radius also reduces the upper bound for $ \Vert \nabla L \Vert_{M^{-1}} $, so convergence is likely to occur, especially if $ \gamma^{i+1} $ turns out to be small.}

\revise{If the trust region radius is small enough, or equivalently the Lagrange multiplier large enough, it can be proven that a decrease in the trust region radius leads to a decrease in $ \Vert \nabla L \Vert_{M^{-1}} $:
\begin{lemma}
    There is $ \hat \lambda \ge \max_i \vert \lambda_i(T) \vert $ such that $ \lambda \mapsto \gamma^{i+1} \vert \langle \bm{x}(\lambda), \bm{e_{i+1}} \rangle \vert $ is a decreasing function for $ \lambda \ge \hat \lambda $.
\end{lemma}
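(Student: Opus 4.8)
The plan is to reduce the quantity $\gamma^{i+1}\,\lvert\langle \bm{x}(\lambda),\bm{e_{i+1}}\rangle\rvert$ to an explicit rational function of $\lambda$ whose monotonicity can be read off directly. First I would restrict attention to $\lambda > -\theta_{\min}$, where $T+\lambda I$ is symmetric positive definite, so that the Moore--Penrose pseudo-inverse coincides with the inverse and $\bm{x}(\lambda) = -\gamma^0 (T+\lambda I)^{-1}\bm{e_1}$. Consequently $\langle \bm{x}(\lambda),\bm{e_{i+1}}\rangle = -\gamma^0\,\bigl[(T+\lambda I)^{-1}\bigr]_{i+1,1}$, i.e.\ up to a constant the off-corner entry of the inverse of an $(i+1)\times(i+1)$ symmetric tridiagonal matrix. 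We may assume all off-diagonal entries $\gamma^1,\dots,\gamma^i$ of $T$ are nonzero, that is, $T$ is irreducible (the case relevant to non-broken-down Lanczos iterations); otherwise the corner entry, and hence the quantity under consideration, vanishes identically and there is nothing to prove.

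The key computation is to evaluate this corner entry by Cramer's rule. The $(1,i+1)$ cofactor of $T+\lambda I$ is obtained by deleting the first row and last column; the resulting matrix is (after the obvious reindexing) triangular, with its main diagonal formed by the subdiagonal entries $\gamma^1,\dots,\gamma^i$, so its determinant equals $\prod_{p=1}^i \gamma^p$ up to sign. Hence $\bigl[(T+\lambda I)^{-1}\bigr]_{i+1,1} = (-1)^{i}\,\bigl(\prod_{p=1}^i \gamma^p\bigr)\big/\det(T+\lambda I)$, and therefore
\[
  \gamma^{i+1}\,\bigl\lvert\langle \bm{x}(\lambda),\bm{e_{i+1}}\rangle\bigr\rvert = \frac{C}{\det(T+\lambda I)}, \qquad C := \gamma^0\,\gamma^{i+1}\prod_{p=1}^i \lvert\gamma^p\rvert > 0,
\]
where the absolute value on the right has been dropped because $\det(T+\lambda I)>0$ for $\lambda>-\theta_{\min}$.

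It then remains to analyze $\det(T+\lambda I) = \prod_j (\lambda + \theta_j)$, the product running over the eigenvalues $\theta_j$ of $T$. For $\lambda > -\theta_{\min}$ every factor is positive and strictly increasing in $\lambda$, so $\det(T+\lambda I)$ is positive and strictly increasing, whence $\lambda \mapsto C/\det(T+\lambda I)$ is strictly decreasing on $(-\theta_{\min},\infty)$. Since $\max_i \lvert\lambda_i(T)\rvert \ge -\theta_{\min}$, choosing $\hat\lambda := \max_i \lvert\lambda_i(T)\rvert + 1$ (which satisfies $\hat\lambda \ge \max_i\lvert\lambda_i(T)\rvert$ and $\hat\lambda > -\theta_{\min}$) yields the assertion. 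The only genuinely technical point is the cofactor evaluation in the second step --- identifying the triangular structure and the correct sign of the $(1,i+1)$ cofactor --- but this is precisely the standard corner-entry formula for inverses of tridiagonal matrices and presents no real difficulty.
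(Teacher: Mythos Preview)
Your argument is correct and in fact sharper than the paper's. The paper proceeds via the Neumann series $(T+\lambda I)^{-1}=\sum_{k\ge 0}(-1)^k\lambda^{-(k+1)}T^k$, valid for $\lambda>\max_j|\lambda_j(T)|$, observes that $\bm{e_{i+1}}^{T}T^k\bm{e_1}$ vanishes for $k<i$ and equals $\prod_{j=1}^{i}\gamma^j$ for $k=i$, and concludes that $\gamma^{i+1}|\langle\bm{x}(\lambda),\bm{e_{i+1}}\rangle|=\prod_{j=0}^{i+1}\gamma^j/\lambda^{i+1}+O(\lambda^{-(i+2)})$, which is decreasing once $\lambda$ is large enough for the remainder to be dominated. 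Your route via Cramer's rule instead produces the exact closed form $C/\det(T+\lambda I)=C/\prod_j(\lambda+\theta_j)$ on the whole of $(-\theta_{\min},\infty)$, from which strict monotonicity follows on that entire interval, not merely asymptotically. The two computations are of course consistent (your denominator has leading term $\lambda^{i+1}$), but your argument avoids the somewhat informal ``higher order terms can be neglected'' step and yields the stronger conclusion that any $\hat\lambda>-\theta_{\min}$ with $\hat\lambda\ge\max_j|\lambda_j(T)|$ works; the paper's series approach, by contrast, only furnishes the existence of some sufficiently large $\hat\lambda$.
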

\begin{proof}
  Using the expansion $$ (T_i+\lambda I)^{-1} = \sum_{k \ge 0} (-1)^k \tfrac 1{\lambda^{k+1}} T^k, $$ which holds for $ \lambda \ge \max_{i} \vert \lambda_i(T) \vert $, we find:
\begin{align*}
    \Vert \nabla L \Vert_{M^-1} & = \gamma^{i+1} \vert \langle \bm x(\lambda), \bm{e_{i+1}} \rangle \vert = \gamma^{i+1} \gamma^0 \vert \langle (T_i+\lambda I)^{-1} \bm{e_1}, \bm{e_{i+1}} \rangle \vert \\
    & = \gamma^{i+1} \gamma^0 \left \vert \sum_{k\ge 0} (-1)^k \tfrac{1}{\lambda^{k+1}} \bm{e_{i+1}}^T T^k \bm{e_1} \right \vert = \frac{\prod_{j=0}^{i+1} \gamma^j}{\lambda^{i+1}} + O( (\tfrac 1\lambda)^{i+2}),
\end{align*}
where we have made use of the facts that $ \bm{e_{i+1}^T} T^k \bm{e_0} $ vanishes for $ k < i $, and that $ \bm{e_{i+1}^T} T^k \bm{e_0} = \prod_{j=1}^{i} \gamma^j $, which can be easily proved using the relation $ T \bm{e_j} = \gamma^{j-1} \bm{e_{j-1}} + \gamma^{j+1} \bm{e_{j+1}} + \delta_j \bm{e_j} $.
    The claim now holds if $ \lambda $ is large enough such that higher order terms in this expansion can be neglected.
\end{proof}
}

\subsection{Termination criterion}

Convergence is reported as soon as the Lagrangian gradient satisfies
\begin{align*}
    \Vert \nabla L \Vert_{M^{-1}} \le \begin{cases} \max\{\emph{\texttt{tol\_abs\_i}}, \emph{\texttt{tol\_rel\_i}} \, \Vert g \Vert_{M^{-1}} \}, & \text{if } \lambda = 0 \\
    \max\{\emph{\texttt{tol\_abs\_b}}, \emph{\texttt{tol\_rel\_b}} \, \Vert g \Vert_{M^{-1}} \}, & \text{if } \lambda > 0 \end{cases}.
\end{align*}
The rationale for using possibly different tolerances in the interior and boundary case is motivated from applications in nonlinear optimization where trust region subproblems are used as globalization mechanism.
There a local minimizer of the nonlinear problem will be an interior solution to the trust region subproblem and it is thus not necessary to solve the trust region subproblem in the boundary case to highest accuracy.

\subsection{\revise{Heuristic addressing ill-conditioning}}

\revise{The pL directions $ P_i $ are $ M $-orthogonal if computed using exact arithmetic.
  It is well known that, in finite precision and if $ H $ is ill-conditioned, $ M $-orthogonality may be lost due to propagation of roundoff errors .
  An indication that this happened may be had by verifying $$ \tfrac 12 \langle \bm h, T_i \bm h \rangle + \gamma^0 \langle \bm h, \bm{e_1} \rangle = q(P_i \bm h),$$
  which holds if $ P_i $ indeed is $ M $-orthogonal.
  On several badly scaled instances, for example \texttt{ARGLINB} of the \texttt{CUTEst} test set, we have seen that that both quantities above may even differ in sign,
  in which case the solution of the trust-region subproblem would yield a direction of ascent.
  This issue becomes especially severe if $ H $ has small, but positive eigenvalues and admits an interior solution of the trust region subproblem.
  Then, the Ritz values computed as eigenvalues of $ T_i $ may very well be negative due to the introduction of roundoff errors, and enforce a convergence to a boundary point of the trust region subproblem.
Finally, if the trust region radius $ \Delta $ is large, the two ``solutions'' can differ in a significantly.}

\revise{To address this observation, we have developed a heuristic that, by convexification, permits to obtain a descent direction of progress even if $ P_i $ has lost $ M $-orthogonality.
For this, let $ \underline{\rho} := \min_j \frac{\langle p_j, H p_j \rangle}{\langle p_j, Mp_j \rangle} $ and $ \overline \rho := \max_j \frac{\langle p_j, H p_j \rangle}{\langle p_j, Mp_j \rangle} $ be the minimal respective and Rayleigh quotients used as estimates of extremal eigenvalues of $ H $. Both are cheap to compute during the Krylov subspace iterations.
  \begin{enumerate}
    \item If algorithm~\ref{alg:KrylovTR} has converged with a boundary solution such that $ \lambda \ge 10^{-2} \max\{1, \rho_{\max} \} $ and $ \vert \rho_{\min} \vert \le 10^{-8} \rho_{\max} $, the case described above may be at hand. We compute $ q_x := q(P_i \bm{h}) $ in addition to $ q_h := \tfrac 12 \langle \bm h, T_i \bm h \rangle + \gamma^0 \langle \bm h, \bm{e_1} \rangle $.
      If either $ q_x > 0 $ or $ \vert q_x - q_h \vert > 10^{-7} \max\{ 1, \vert q_x \vert \} $, we resolve with a convexified problem.
    \item The convexification heuristic we use is obtained by adding a positive diagonal matrix $ D $ to $ T_i $, where $ D $ is chosen such that $ T_i + D $ is positive definite. We then resolve then the tridiagonal problem with $ T_i + D $ as the new convexified tridiagonal matrix.
      We obtain $ D $ by attempting to compute a Cholesky factor $ T_i $. Monitoring the pivots in the Cholesky factorization, we choose $ d_j $ such that the pivots $ \pi_j $ are at least slightly positive. The formal procedure is given in algorithm~\ref{alg:convexify}. Computational results use the constants $ \varepsilon = 10^{-12} $ and $ \sigma = 10 $.
  \end{enumerate}
}
   \begin{algorithm}
       \DontPrintSemicolon
       \SetKwInOut{Input}{\revise{input}}
       \SetKwInOut{Output}{\revise{output}}
       \SetKw{KwReturn}{\revise{return}}
       \Input{\revise{$ T_i $, $ \varepsilon > 0 $, $ \sigma > 0 $}}
       \Output{\revise{$ D $ such that $ T_i + D $ is positive definite}}
       \BlankLine  
       \For{\revise{$j = 0, \ldots, i$}}{
         \revise{{$ \hat \pi_j := \begin{cases} \delta_0, & j = 0 \\ \delta_j - \gamma_j^2 / \pi_{j-1}, & j > 0 \end{cases} $}}\;
         \revise{{$ d_j := \begin{cases} 0, & \hat \pi_j \ge \varepsilon \\ \sigma \vert \gamma_j^2/\pi_{j-1} - \delta_j \vert, & \hat \pi_j < \varepsilon \end{cases} $}}\;
         \revise{{$ \pi_j := \hat \pi_j + d_j $}}\;
       }
       \caption{\revise{Convexification heuristic for the tridiagonal matrix $ T_i $.}}
       \label{alg:convexify}
   \end{algorithm}

\subsection{TRACE}
\label{sec:trlib-trace}

In the recently proposed TRACE algorithm \cite{Curtis2016}, trust region problems are also used.
In addition to solving trust region problems, the following operations have to be performed:
\begin{itemize}
  \item $ \min_x \tfrac 12 \langle x, (H+\lambda M) x \rangle + \langle g, x \rangle $,
  \item Given constants $ \sigma_{\text l}, \sigma_{\text u} $ compute $ \lambda $ such that the solution point
    of $ \min_x \tfrac 12 \langle x, (H+\lambda M) x \rangle + \langle g, x \rangle $ satisfies $ \sigma_{\text l} \le \frac{\lambda}{\Vert x \Vert_{M}} \le \sigma_{\text u} $.
\end{itemize}

These operations have to be performed after a trust region problem has been solved and can be efficiently implemented using the Krylov subspaces already built up.

We have implemented these as suggested in \cite{Curtis2016}, where the first operation requires one backsolve with tridiagonal data and the second one is implemented as root finding on $ \lambda \mapsto \tfrac{\lambda}{\Vert x(\lambda) \Vert} - \sigma $ with a certain $ \sigma \in [\sigma_{\text l}, \sigma_{\text u}] $ that is terminated as soon as $ \tfrac{\lambda}{\Vert x(\lambda) \Vert} \in  [\sigma_{\text l}, \sigma_{\text u}] $.

\subsection{C11 Interface}
\label{sec:c_interfaces}

The algorithm has been implemented in C11.
The user is responsible for holding vector-data and invokes the algorithm by repeated calls to the function \texttt{trlib\_krylov\_min} with integer and floating point workspace and dot products $ \langle v, g \rangle, \langle p, Hp \rangle $ as arguments and in return receives status informations and instructions to be performed on the vectorial data.
A detailed reference is provided in the \texttt{Doxygen} documentation to the code.

\subsection{Python Interface}
\label{sec:py_interfaces}

A low-level python interface to the C library has been created using \texttt{Cython} that closely resembles the C API and allows for easy integration into more user-friendly, high-level interfaces.

As a particular example, a trust region solver for PDE-constrained optimization problems has been developed to be used from \texttt{DOLFIN-adjoint} \cite{Farrell2013,Funke2013} within \texttt{FEniCS} \cite{Alnaes2015,Logg2010,Alnaes2014}.
Here vectorial data is only considered as \texttt{FEniCS}-objects and no numerical data except of dot products is used of these objects.

    \section{Numerical Results}
\label{sec:results}

In this section, we present an assessment of the computational performance of
our implementation \texttt{trlib} of the GLTR method, and compare it to the
reference implementation \texttt{GLTR} as well as several competing methods for
solving the trust region problem and their respective implementations.

\subsection{Generation of Trust-Region Subproblems}
\label{sec:cutest}

For want of a reference benchmark set of non-convex trust region subproblems, we
resorted to the subset of unconstrained nonlinear programming problems of the
\texttt{CUTEst} benchmark library, and use a standard trust region
algorithm, e.g.~Gould et al.~\cite{Gould1999}, for solving $
\min_{\boldsymbol x \in \mathbb R^n} f(\boldsymbol x) $, as a generator of
trust-region subproblems. The algorithm starts from a given initial point $
\boldsymbol{x^0} \in \mathbb R^n $ and trust region radius $ \Delta^0 > 0 $, and
iterates for $ k \ge 0 $:

\begin{algorithm}[h]
    \DontPrintSemicolon
    \SetKwInOut{Input}{input}
    \SetKwInOut{Output}{output}
    \SetKw{KwReturn}{return}
    \Input{$f$, $x^0$, $\Delta^0$, $\rho_\textup{acc}$, $\rho_\textup{inc}$, $\gamma^+$, $\gamma^-$, \emph{\texttt{tol\_abs}}}
    \Output{$k$, $x^k$}
    \BlankLine  
    \For{$k\geq 0$}{
        {Evaluate $ \boldsymbol{g^k} := \nabla f(\boldsymbol{x^k}) $}\;
        {Test for termination: Stop if $ \Vert \boldsymbol{g^k} \Vert \le \emph{\texttt{tol\_abs}} $}\;
        {Evaluate $ H^k := \nabla^2_{\boldsymbol{xx}} f(\boldsymbol{x^k}) $}\;
        {Compute (approximate) minimizer $ \boldsymbol{d^k} $ to $ \text{TR}(H^k,\boldsymbol{g^k},I,\Delta^k) $}\;
        {Assess the performance  $ \rho^k := ({f(\boldsymbol{x^k}+\boldsymbol{d^k}) - f(\boldsymbol{x^k})})/{q(\boldsymbol {d^k})} $ of the step}\;
        {Update step: $\boldsymbol{x^{k+1}} := \begin{cases} \boldsymbol{x^k} + \boldsymbol{d^k}, & \rho^k \ge \rho_{\text{acc}} \\ \boldsymbol{x^k}, & \rho^k < \rho_{\text{acc}} \end{cases},$}\;
        {Update trust region radius: $\Delta^{k+1} := \begin{cases} \gamma^+ \Delta^k, & \rho^k \ge \rho_{\text{inc}} \\ \Delta^k, & \rho_{\text{acc}} \le \rho^k < \rho_{\text{inc}} \\ \gamma^- \Delta^k, & \rho^k < \rho_{\text{acc}} \end{cases}$}\;
    }
    \caption{Standard trust region algorithm for unconstrained nonlinear programming, used to generate trust region subproblems from \texttt{CUTEst}.}
    \label{alg:TRgenerator}
\end{algorithm}

In a first study, we compared our implementation \texttt{trlib} of the GLTR
method  to the reference implementation \texttt{GLTR} as well as several
competing methods for solving the trust region problem, and their respective
implementations, as follows:
\begin{itemize}
    \item \texttt{GLTR} \cite{Gould1999} in the \texttt{GALAHAD} library implements the GLTR method.
    \item \texttt{LSTRS} \cite{Rojas2008} uses an eigenvalue based approach.
        The implementation uses \texttt{MATLAB} and makes use of the direct \texttt{ARPACK} \cite{Lehoucq1998} reverse communication interface, which is deprecated in recent versions of \texttt{MATLAB} and lead to crashes within \texttt{MATLAB 2013b} used by us.
        We thus resorted to the standard \texttt{eigs} eigenvalue solver provided by \texttt{MATLAB} which might severly impact the behaviour of the algorithm.
    \item \texttt{SSM} \cite{Hager2001} implements a sequential subspace method that may use an SQP accelerated step.
    \item \texttt{ST} is an implementation of the truncated conjugate gradient method proposed independently by Steihaug \cite{Steihaug1983} and Toint \cite{Toint1981}.
    \item \texttt{trlib} is our implementation of the GLTR method.
\end{itemize}

All codes, with the exception of \texttt{LSTRS}, have been implemented in a
compiled language, Fortran 90 in case of \texttt{GLTR} and C in for all  other
codes, by their respective authors. \texttt{LSTRS} has been implemented in
interpreted MATLAB code. The benchmark code used to run this comparison has also
been made open source and is available as \texttt{trbench} \cite{Lenders2016b}.

In our test case the parameters $ \Delta^0 = \tfrac{1}{\sqrt{n}} $, $ \emph{\texttt{tol\_abs}} = 10^{-7} $, $ \rho_{\text{acc}} = 10^{-2} $, $ \rho_{\text{inc}} = 0.95 $, $ \gamma^+ = 2 $ and $ \gamma^- = \tfrac 12 $ have been used.
\revise{We used the subproblem convergence criteria as specified in table~\ref{tab:convcrit} for the different solvers, trying to have as comparable convergence criteria as possible within the available applications.
Our rationale for the interior convergence criterion to request $ \Vert \nabla L \Vert_{M^{-1}} = O( \Vert \bm{g^k} \Vert_{M^{-1}}^2 ) $ is that it defines an inexact Newton method with q-quadratic convergence rate, \cite[Thm 7.2]{Nocedal2006}.
As \texttt{LSTRS} is a method based on solving a generalized eigenvalue problem, its convergence criterion depends on the convergence criterion of the generalized eigensolver and is incomparable with the other termination criteria. With the exception of \texttt{trlib}, no other solver allows to specify different convergence criteria for interior and boundary convergence.}
\begin{table}
    \revise{\begin{tabular}{llll}
      solver & $ \tau $ interior convergence & $ \tau $ boundary convergence \\ \hline
      \texttt{GLTR} & $  \min\{ 0.5, \Vert \bm{g^k} \Vert_{M^{-1}} \} \Vert \bm{g^k} \Vert_{M^{-1}} $ & identical to interior \\
      \texttt{LSTRS} & \multicolumn{2}{l}{defined in dependence of convergence of implicit restarted Arnoldi method} \\
      \texttt{SSM} & $ \min\{ 0.5, \Vert \bm{g^k} \Vert \}_{M^{-1}} \Vert \bm{g^k} \Vert_{M^{-1}} $ & identical to interior \\
      \texttt{ST} & $ \min\{ 0.5, \Vert \bm{g^k} \Vert \}_{M^{-1}} \Vert \bm{g^k} \Vert_{M^{-1}} $ & method heuristic in that case \\
      \texttt{trlib} & $ \min\{ 0.5, \Vert \bm{g^k} \Vert \}_{M^{-1}} \Vert \bm{g^k} \Vert_{M^{-1}} $ & $ \max\{10^{-6}, \min\{ 0.5, \Vert \bm{g^k} \Vert_{M^{-1}}^{1/2} \} \} \Vert \bm{g^k} \Vert_{M^{-1}} $ 
    \end{tabular}}
    \caption{\revise{Convergence criteria for subproblem solvers $ \Vert \nabla L \Vert_{M^{-1}} \le \tau $}}
    \label{tab:convcrit}
\end{table}
\newcommand{\ca}{blue}
\newcommand{\cb}{blue!60}
\newcommand{\cc}{blue!25}
\newcommand{\cd}{blue!60}
\newcommand{\ce}{blue}
\begin{figure}[ht]
    \centering
    \includegraphics{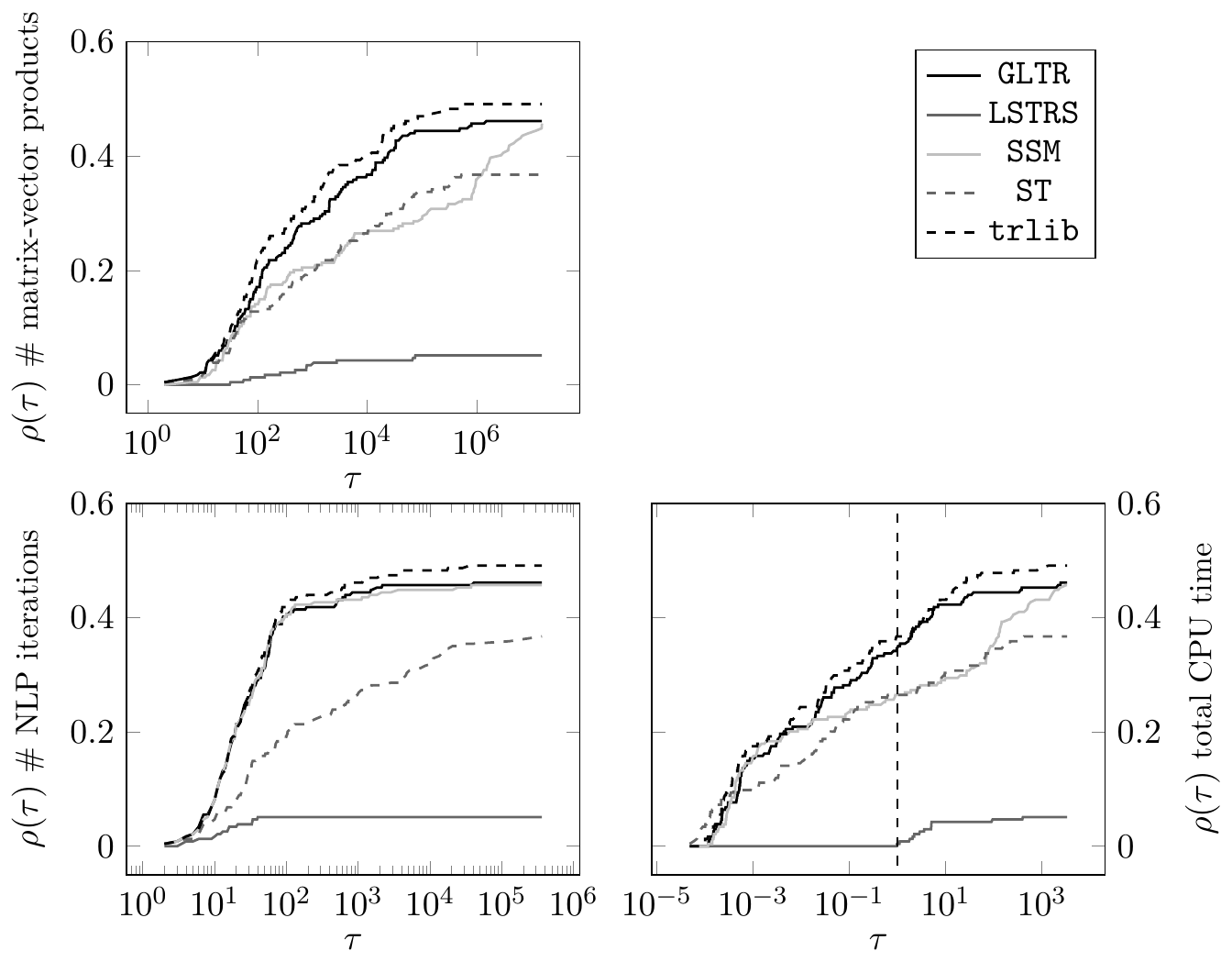}
    \caption{Performance Profiles for matrix-vector products, NLP iterations and total CPU time for different trust region subproblem solvers when used in a standard trust region algorithm for unconstrained minimization evaluated on the set of all unconstrained minimization problems from the \texttt{CUTEst} library.}
\end{figure}

The performance of the different algorithms is assessed using extended performance profiles as introduced by \cite{Dolan2002,Mahajan2011}, for a given set $ S $ of solvers and $ P $ of problems the performance profile for solver $ s \in S $ is defined by 
\begin{equation*}
  \rho_s(\tau) := \frac{1}{\vert P \vert} \vert \{ p \in P \, \vert \, r_{s,p} \le \tau \} \vert, \quad \text{where }r_{s,p} = \frac{t_{s,p} \text{ performance of } s \in S \text{ on } p \in P}{\min_{\sigma \in S, \sigma \neq s} t_{\sigma,p} }. 
\end{equation*} 

It can be seen that \texttt{GLTR} and \texttt{trlib} are the most robust solvers on the subset of unconstrained problems from \texttt{CUTEst} in the sense that they eventually solve the largest fraction of problems among all solversand that they are also among the fastest solvers.
That \texttt{GLTR} and \texttt{trlib} show similar performance is to be expected as they implement the identical GLTR algorithm, where \texttt{trlib} is slightly more robust and faster.
We attribute this to the implementation of efficient hotstart capabilities and also the Lanczos process to build up the Krylov subspaces once directions of zero curvature are encountered.
\revise{Tables~\ref{tab:cutestresults}--\ref{tab:cutestresults3} show the individual results on the \texttt{CUTEst} library.}

\begin{table}
\begin{tiny}
\begin{tabular}{l|l|ll|ll|ll|ll|ll}
  problem & $ n $ & \multicolumn{2}{c|}{\texttt{GLTR}} & \multicolumn{2}{c|}{\texttt{LSTRS}} & \multicolumn{2}{c|}{\texttt{SSM}} & \multicolumn{2}{c|}{\texttt{ST}} & \multicolumn{2}{c}{\texttt{trlib}} \\
          & & $ \Vert \nabla f \Vert $ & \# $ Hv $  & $ \Vert \nabla f \Vert $ & \# $ Hv $ & $ \Vert \nabla f \Vert $ & \# $ Hv $ & $ \Vert \nabla f \Vert $ & \# $ Hv $ & $ \Vert \nabla f \Vert $ & \# $ Hv $  \\
\texttt{AKIVA} & 2  & 3.7e-04 & 12 & 1.7e-03 & 104 & 3.7e-04 & 18 & 3.7e-04 & 12 & 3.7e-04 & 12 \\
\texttt{ALLINITU} & 4  & 1.2e-06 & 28 & 1.9e-05 & 275 & 1.2e-06 & 30 & 3.3e-05 & 20 & 1.2e-06 & 27 \\
\texttt{ARGLINA} & 200  & 2.1e-13 & 9 & 1.0e-13 & 485 & 2.8e-13 & 648 & 1.9e-13 & 10 & 1.8e-13 & 9 \\
\texttt{ARGLINB} & 200  & 1.4e-01 & 9 & 2.1e-01 & 14695 & \multicolumn{2}{c|}{failure} & 3.6e-04 & 152 & 9.7e-03 & 76 \\
\texttt{ARGLINC} & 200  & 7.9e-02 & 9 & 3.1e-01 & 9177 & \multicolumn{2}{c|}{failure} & 1.6e-03 & 156 & 5.1e-02 & 21 \\
\texttt{ARGTRIGLS} & 10  & 1.0e-09 & 50 & 3.6e-06 & 372 & 1.0e-09 & 15 & 1.2e-08 & 42 & 1.0e-09 & 50 \\
\texttt{ARWHEAD} & 5000  & 3.7e-11 & 20 & 2.4e-08 & 1054 & 3.7e-11 & 551752 & 3.7e-10 & 24 & 3.7e-11 & 17 \\
\texttt{BA-L16LS} & 66462  & 1.1e+06 & 58453 & 9.8e+07 & 83115 & \multicolumn{2}{c|}{failure} & 2.4e+06 & 20698 & 1.1e+08 & 21941 \\
\texttt{BA-L1LS} & 57  & 4.6e-08 & 317 & 1.3e+01 & 72289 & 6.0e-08 & 30336 & 1.2e-08 & 436 & 2.4e-08 & 758 \\
\texttt{BA-L21LS} & 34134  & 6.2e+06 & 129819 & 5.7e+07 & 208393 & 2.7e+09 & 1123576 & 1.2e+06 & 43139 & 9.8e+05 & 36639 \\
\texttt{BA-L49LS} & 23769  & 4.4e+04 & 250639 & 1.7e+06 & 1412516 & \multicolumn{2}{c|}{failure} & 2.9e+05 & 60741 & 8.7e+05 & 35305 \\
\texttt{BA-L52LS} & 192627  & 3.5e+08 & 21964 & 6.7e+09 & 36939 & \multicolumn{2}{c|}{failure} & 3.1e+07 & 16589 & 2.7e+07 & 19543 \\
\texttt{BA-L73LS} & 33753  & 1.4e+06 & 161282 & 7.1e+12 & 32865 & \multicolumn{2}{c|}{failure} & 7.5e+11 & 10071 & 4.7e+07 & 92020 \\
\texttt{BARD} & 3  & 5.6e-07 & 23 & \multicolumn{2}{c|}{failure} & 5.6e-07 & 24 & 9.8e-08 & 2910 & 5.6e-07 & 24 \\
\texttt{BDQRTIC} & 5000  & 5.7e-04 & 218 & 5.8e-04 & 4235 & 5.7e-04 & 811903 & 1.0e-02 & 529 & 5.7e-04 & 209 \\
\texttt{BEALE} & 2  & 1.2e-08 & 16 & 4.8e-06 & 93 & 1.2e-08 & 24 & 2.0e-08 & 62 & 1.2e-08 & 16 \\
\texttt{BENNETT5LS} & 3  & 6.5e-08 & 405 & \multicolumn{2}{c|}{failure} & 2.2e-04 & 2256 & 9.9e-08 & 876 & 1.8e-08 & 1691 \\
\texttt{BIGGS6} & 6  & 1.8e-08 & 71 & \multicolumn{2}{c|}{failure} & 5.9e-09 & 108 & 2.5e-04 & 20128 & 2.1e-08 & 410 \\
\texttt{BOX} & 10000  & 4.0e-04 & 32 & 6.8e-05 & 1021 & 4.0e-04 & 3278 & 1.8e-05 & 2172 & 4.0e-04 & 32 \\
\texttt{BOX3} & 3  & 6.6e-11 & 24 & \multicolumn{2}{c|}{failure} & 6.6e-11 & 24 & 1.0e-07 & 17266 & 6.6e-11 & 24 \\
\texttt{BOXBODLS} & 2  & 2.6e-01 & 50 & 7.8e-05 & 450 & 2.6e-01 & 87 & 3.8e-01 & 23 & 2.6e-01 & 42 \\
\texttt{BOXPOWER} & 20000  & 2.4e-08 & 86 & \multicolumn{2}{c|}{failure} & 1.6e+05 & 10285059 & 4.7e-05 & 1335136 & 5.6e-08 & 107 \\
\texttt{BRKMCC} & 2  & 6.1e-06 & 6 & 2.0e-08 & 74 & 6.1e-06 & 9 & 6.1e-06 & 6 & 6.1e-06 & 6 \\
\texttt{BROWNAL} & 200  & 2.8e-09 & 37 & \multicolumn{2}{c|}{failure} & 4.2e-10 & 128430 & 1.0e-07 & 54218 & 7.9e-10 & 32 \\
\texttt{BROWNBS} & 2  & 6.0e-06 & 75 & 1.1e-08 & 777 & 2.4e-07 & 99 & 8.9e-10 & 69 & 2.4e-07 & 67 \\
\texttt{BROWNDEN} & 4  & 7.3e-05 & 47 & 5.1e-04 & 268 & 7.3e-05 & 36 & 1.1e-01 & 54 & 7.3e-05 & 45 \\
\texttt{BROYDN3DLS} & 10  & 6.2e-11 & 60 & 4.5e-05 & 218 & 6.2e-11 & 18 & 1.4e-10 & 43 & 6.2e-11 & 57 \\
\texttt{BROYDN7D} & 5000  & 4.7e-04 & 13895 & 1.6e-04 & 201198 & 2.9e-04 & 2285169 & 1.2e-03 & 2206 & 5.8e-04 & 1660 \\
\texttt{BROYDNBDLS} & 10  & 2.0e-11 & 110 & 8.0e-05 & 466 & 2.0e-11 & 33 & 3.6e-13 & 70 & 2.0e-11 & 105 \\
\texttt{BRYBND} & 5000  & 6.2e-08 & 630 & 1.9e-06 & 93338 & 1.2e-09 & 3781397 & 7.6e-10 & 733 & 8.3e-13 & 639 \\
\texttt{CHAINWOO} & 4000  & 6.6e-04 & 40920 & 8.8e+02 & 69945 & 1.3e-04 & 3282530 & 1.5e-02 & 41073 & 3.1e-04 & 11482 \\
\texttt{CHNROSNB} & 50  & 5.2e-08 & 2032 & 8.1e-05 & 39963 & 3.5e-09 & 4008 & 1.8e-13 & 629 & 2.7e-10 & 1422 \\
\texttt{CHNRSNBM} & 50  & 1.5e-08 & 3181 & 1.0e-05 & 107423 & 4.4e-08 & 5065 & 1.4e-09 & 809 & 9.2e-09 & 1863 \\
\texttt{CHWIRUT1LS} & 3  & 5.4e+00 & 59 & 2.3e-01 & 139 & 2.1e-01 & 42 & 5.3e+00 & 43 & 2.1e-01 & 27 \\
\texttt{CHWIRUT2LS} & 3  & 4.0e-03 & 57 & 9.8e-02 & 138 & 3.4e-01 & 39 & 1.3e-02 & 37 & 3.4e-01 & 23 \\
\texttt{CLIFF} & 2  & 2.1e-05 & 38 & \multicolumn{2}{c|}{failure} & 2.1e-05 & 81 & 2.1e-05 & 41 & 2.1e-05 & 40 \\
\texttt{COSINE} & 10000  & 1.2e-06 & 213 & 7.2e+01 & 1 & 1.2e-06 & 6703 & 9.3e-03 & 72 & 1.2e-06 & 133 \\
\texttt{CRAGGLVY} & 5000  & 1.3e-04 & 622 & 1.2e-04 & 27113 & 1.3e-04 & 4646010 & 2.3e-03 & 453 & 1.3e-04 & 698 \\
\texttt{CUBE} & 2  & 1.2e-07 & 64 & 9.2e-06 & 564 & 2.6e-11 & 105 & 9.8e-08 & 204 & 2.6e-11 & 50 \\
\texttt{CURLY10} & 10000  & 3.7e-01 & 93106 & 1.3e+02 & 1 & 3.7e-01 & 1755070 & 1.8e-04 & 290643 & 4.5e-01 & 84837 \\
\texttt{CURLY20} & 10000  & 4.2e-03 & 94429 & 3.0e+02 & 1 & 2.5e-03 & 1334642 & 8.3e-02 & 98598 & 5.2e-03 & 96190 \\
\texttt{CURLY30} & 10000  & 2.7e-01 & 78302 & 4.2e+03 & 6974346 & 2.7e-01 & 146501 & 1.9e-02 & 128689 & 3.3e-01 & 77637 \\
\texttt{DANWOODLS} & 2  & 2.2e-06 & 18 & 5.6e-06 & 232 & 2.2e-06 & 27 & 2.2e-06 & 18 & 2.2e-06 & 18 \\
\texttt{DECONVU} & 63  & 2.4e-08 & 3650 & 1.3e-03 & 418777 & 4.0e-09 & 37199 & 2.3e-06 & 563021 & 8.3e-08 & 72328 \\
\texttt{DENSCHNA} & 2  & 6.6e-12 & 12 & 5.3e-08 & 136 & 6.6e-12 & 18 & 6.6e-12 & 12 & 6.6e-12 & 12 \\
\texttt{DENSCHNB} & 2  & 5.8e-10 & 12 & 1.3e-06 & 155 & 5.8e-10 & 18 & 1.0e-10 & 9 & 5.8e-10 & 12 \\
\texttt{DENSCHNC} & 2  & 8.7e-08 & 20 & 3.4e-06 & 237 & 8.7e-08 & 30 & 5.9e-08 & 20 & 8.7e-08 & 20 \\
\texttt{DENSCHND} & 3  & 5.1e-08 & 114 & \multicolumn{2}{c|}{failure} & 8.1e-08 & 135 & 3.7e-06 & 11399 & 8.1e-08 & 120 \\
\texttt{DENSCHNE} & 3  & 5.2e-12 & 35 & 9.5e-05 & 307 & 5.2e-12 & 45 & 2.1e-10 & 1442 & 5.2e-12 & 25 \\
\texttt{DENSCHNF} & 2  & 2.1e-09 & 12 & 3.6e-05 & 97 & 2.1e-09 & 18 & 1.0e-09 & 12 & 2.1e-09 & 12 \\
\texttt{DIXMAANA} & 3000  & 2.3e-13 & 44 & 1.5e-13 & 2763 & 2.3e-13 & 478120 & 6.7e-21 & 31 & 2.3e-13 & 38 \\
\texttt{DIXMAANB} & 3000  & 5.7e-08 & 503 & 7.3e-05 & 40355 & 5.7e-08 & 945986 & 1.6e-13 & 37 & 5.7e-08 & 80 \\
\texttt{DIXMAANC} & 3000  & 4.5e-12 & 1382 & 1.7e-05 & 40963 & 4.5e-12 & 1520049 & 4.5e-12 & 37 & 2.8e-09 & 95 \\
\texttt{DIXMAAND} & 3000  & 3.4e-13 & 1533 & 7.3e-08 & 68784 & 3.4e-13 & 1656761 & 2.7e-10 & 38 & 7.0e-17 & 169 \\
\texttt{DIXMAANE} & 3000  & 4.6e-08 & 2012 & \multicolumn{2}{c|}{failure} & 1.3e-11 & 3089 & 4.0e-11 & 515 & 1.6e-12 & 1281 \\
\texttt{DIXMAANF} & 3000  & 4.5e-08 & 2644 & \multicolumn{2}{c|}{failure} & 2.1e-08 & 1348070 & 1.0e-07 & 22275 & 6.7e-11 & 1079 \\
\texttt{DIXMAANG} & 3000  & 4.8e-08 & 4035 & 1.1e+00 & 845145 & 1.1e-08 & 1242789 & 1.0e-07 & 22211 & 2.0e-08 & 1673 \\
\texttt{DIXMAANH} & 3000  & 3.9e-08 & 5627 & 5.5e+02 & 1950740 & 5.9e-10 & 1696337 & 1.0e-07 & 22207 & 8.7e-08 & 2011 \\
\texttt{DIXMAANI} & 3000  & 1.0e-06 & 40507 & 1.0e+03 & 1 & 6.1e-06 & 19337 & 2.6e-07 & 3582057 & 1.8e-12 & 27353 \\
\texttt{DIXMAANJ} & 3000  & 4.6e-08 & 23746 & 2.2e+01 & 593623 & 6.2e-13 & 952725 & 1.8e-07 & 3314012 & 1.7e-07 & 11321 \\
\texttt{DIXMAANK} & 3000  & 4.6e-08 & 20831 & 1.5e+03 & 3100658 & 3.3e-11 & 1555718 & 1.8e-07 & 3310116 & 6.7e-07 & 14341 \\
\texttt{DIXMAANL} & 3000  & 4.6e-08 & 24371 & 3.1e+02 & 1122879 & 1.8e-09 & 1760641 & 1.8e-07 & 3319300 & 1.9e-11 & 16093 \\
\texttt{DIXMAANM} & 3000  & 4.7e-08 & 9845 & 4.4e+02 & 1 & 1.4e-11 & 2559 & 2.8e-07 & 4041601 & 1.0e-05 & 10745 \\
\texttt{DIXMAANN} & 3000  & 4.7e-08 & 33134 & 5.3e-01 & 1792578 & 4.5e-09 & 878377 & 1.9e-07 & 3874306 & 6.1e-08 & 18948 \\
\texttt{DIXMAANO} & 3000  & 4.8e-08 & 33105 & 1.1e-01 & 1810480 & 7.4e-08 & 968909 & 1.9e-07 & 3918576 & 3.4e-09 & 15832 \\
\texttt{DIXMAANP} & 3000  & 5.4e-08 & 19509 & 1.1e+02 & 90319 & 2.7e-08 & 1282847 & 2.8e-07 & 5486601 & 8.5e-10 & 12074 \\
\texttt{DIXON3DQ} & 10000  & 4.6e-08 & 40506 & 5.7e+00 & 1 & 6.1e-09 & 100140 & 1.3e-05 & 15308266 & 1.4e-12 & 19971 \\
\texttt{DJTL} & 2  & 3.9e+00 & 155 & 1.2e+05 & 1528 & 1.0e+01 & 3360 & 6.6e-01 & 1029 & 9.8e+00 & 2160 \\
\texttt{DMN15103LS} & 99  & 4.2e+01 & 924732 & 5.3e+03 & 177264 & 1.0e+02 & 87836914 & 7.8e+00 & 783230 & 6.6e+01 & 767826 \\
\texttt{DMN15332LS} & 66  & 2.7e-03 & 719233 & 8.1e+01 & 626859 & 3.6e+01 & 99777049 & 2.5e+00 & 1213511 & 2.5e+00 & 996706 \\
\texttt{DMN15333LS} & 99  & 1.5e+01 & 928176 & 2.7e+02 & 730749 & \multicolumn{2}{c|}{failure} & 5.4e+00 & 874786 & 2.9e+00 & 769091 \\
\texttt{DMN37142LS} & 66  & 9.4e-03 & 385536 & 3.1e+01 & 846259 & 1.4e-02 & 63711807 & 1.7e+00 & 1256055 & 1.7e+02 & 1073546 \\
\texttt{DMN37143LS} & 99  & 1.1e+00 & 547560 & 3.5e+03 & 84848 & 4.5e+00 & 41749169 & 1.4e+01 & 777991 & 1.3e+01 & 736780 \\
\texttt{DQDRTIC} & 5000  & 3.3e-10 & 39 & 8.3e-14 & 792 & 4.2e-12 & 3027385 & 1.3e-11 & 22 & 3.2e-10 & 25 \\
\texttt{DQRTIC} & 5000  & 4.1e-08 & 14236 & 1.3e+13 & 1 & 3.5e-08 & 15362086 & 1.0e-07 & 369300 & 3.5e-08 & 19244 \\
\texttt{ECKERLE4LS} & 3  & 1.8e-08 & 13 & \multicolumn{2}{c|}{failure} & 2.4e-08 & 63 & 1.6e-07 & 10001 & 2.4e-08 & 57 \\
\texttt{EDENSCH} & 2000  & 5.1e-05 & 342 & 9.5e-03 & 65271 & 5.1e-05 & 1645581 & 1.1e-04 & 147 & 5.1e-05 & 208 \\
\texttt{EG2} & 1000  & 2.9e-08 & 6 & \multicolumn{2}{c|}{failure} & 2.1e-04 & 1126 & 1.2e-02 & 11 & 2.9e-08 & 6 \\
\texttt{EIGENALS} & 2550  & 4.2e-07 & 9436 & \multicolumn{2}{c|}{failure} & 1.9e+00 & 276726 & 7.2e-08 & 151148 & 3.5e-09 & 5959 \\
\texttt{EIGENBLS} & 2550  & 6.5e-08 & 745535 & 4.8e+00 & 329779 & 6.8e-03 & 475261 & 3.3e-06 & 1132767 & 4.8e-05 & 1056840 \\
\end{tabular}
\end{tiny}
\caption{\revise{Results of subproblem solvers in individual \texttt{CUTEst} problems, part 1}}
\label{tab:cutestresults}
\end{table}

\begin{table}
\begin{tiny}
\begin{tabular}{l|l|ll|ll|ll|ll|ll}
  problem & $ n $ & \multicolumn{2}{c|}{\texttt{GLTR}} & \multicolumn{2}{c|}{\texttt{LSTRS}} & \multicolumn{2}{c|}{\texttt{SSM}} & \multicolumn{2}{c|}{\texttt{ST}} & \multicolumn{2}{c}{\texttt{trlib}} \\
          & & $ \Vert \nabla f \Vert $ & \# $ Hv $  & $ \Vert \nabla f \Vert $ & \# $ Hv $ & $ \Vert \nabla f \Vert $ & \# $ Hv $ & $ \Vert \nabla f \Vert $ & \# $ Hv $ & $ \Vert \nabla f \Vert $ & \# $ Hv $  \\
\texttt{EIGENCLS} & 2652  & 3.8e-08 & 796370 & \multicolumn{2}{c|}{failure} & 5.7e-01 & 402829 & 5.4e-09 & 66267 & 7.9e-09 & 270864 \\
\texttt{ENGVAL1} & 5000  & 2.4e-03 & 120 & 2.4e-03 & 18197 & 2.4e-03 & 3023116 & 5.9e-04 & 96 & 2.4e-03 & 107 \\
\texttt{ENGVAL2} & 3  & 6.5e-07 & 43 & 5.9e-06 & 353 & 4.5e-15 & 45 & 0.0e+00 & 42 & 1.7e-12 & 45 \\
\texttt{ENSOLS} & 9  & 9.3e-05 & 95 & 9.6e-05 & 412 & 9.3e-05 & 33 & 2.8e-04 & 68 & 9.3e-05 & 88 \\
\texttt{ERRINROS} & 50  & 7.3e-07 & 1446 & \multicolumn{2}{c|}{failure} & 9.0e-04 & 6582 & 7.6e-07 & 109821 & 9.2e-04 & 883 \\
\texttt{ERRINRSM} & 50  & 1.1e-03 & 2817 & \multicolumn{2}{c|}{failure} & 8.3e-03 & 5037 & 2.6e-06 & 720904 & 8.3e-03 & 1487 \\
\texttt{EXPFIT} & 2  & 2.1e-06 & 17 & 6.1e-07 & 131 & 4.8e-09 & 24 & 5.8e-06 & 17 & 4.8e-09 & 12 \\
\texttt{EXTROSNB} & 1000  & 9.9e-08 & 33028 & 2.3e-01 & 18905 & 5.7e-08 & 3716226 & 2.7e-06 & 12048850 & 1.0e-07 & 247139 \\
\texttt{FBRAIN2LS} & 4  & 2.8e-01 & 236 & \multicolumn{2}{c|}{failure} & 1.3e-02 & 138 & 4.5e-04 & 30008 & 1.3e-02 & 187 \\
\texttt{FBRAIN3LS} & 6  & 1.5e-06 & 60534 & \multicolumn{2}{c|}{failure} & 1.6e+01 & 486095 & 2.6e-03 & 39955 & 8.6e-08 & 30562 \\
\texttt{FBRAINLS} & 2  & 3.4e-05 & 14 & 3.9e-05 & 149 & 3.4e-05 & 21 & 8.6e-05 & 14 & 3.4e-05 & 14 \\
\texttt{FLETBV3M} & 5000  & 9.1e-03 & 4883 & \multicolumn{2}{c|}{failure} & 1.1e-03 & 19423 & 2.2e-05 & 885 & 2.6e-03 & 1379 \\
\texttt{FLETCBV2} & 5000  & \multicolumn{2}{c|}{failure} & \multicolumn{2}{c|}{failure} & \multicolumn{2}{c|}{failure} & \multicolumn{2}{c|}{failure} & \multicolumn{2}{c|}{failure} \\
\texttt{FLETCBV3} & 5000  & 3.1e+01 & 14194503 & 3.8e+01 & 55869908 & 3.2e+01 & 15365644 & 2.1e+01 & 4726900 & 3.0e+01 & 8099116 \\
\texttt{FLETCHBV} & 5000  & 2.7e+09 & 38547 & 3.7e+09 & 14764569 & 3.0e+09 & 35263513 & 3.6e+09 & 78 & 3.0e+09 & 18992 \\
\texttt{FLETCHCR} & 1000  & 4.2e-08 & 61120 & 7.0e-05 & 663337 & 4.8e-08 & 300564 & 4.2e-09 & 45367 & 4.8e-08 & 47342 \\
\texttt{FMINSRF2} & 5625  & 4.3e-08 & 12601 & 3.3e-01 & 1 & 6.4e-09 & 44273 & 5.1e-06 & 1931678 & 1.1e-09 & 3067 \\
\texttt{FMINSURF} & 5625  & 1.0e-07 & 8750 & 3.3e-01 & 1 & 5.8e-02 & 27451 & 6.8e-08 & 47015 & 8.7e-06 & 4011 \\
\texttt{FREUROTH} & 5000  & 3.9e-01 & 80 & 3.9e-01 & 4042 & 3.9e-01 & 6628218 & 6.0e-03 & 55 & 3.9e-01 & 69 \\
\texttt{GAUSS1LS} & 8  & 4.2e+01 & 68 & 1.1e+01 & 288 & 4.2e+01 & 21 & 1.4e+01 & 71 & 4.3e+01 & 60 \\
\texttt{GAUSS2LS} & 8  & 2.7e-01 & 79 & 2.3e-01 & 293 & 2.7e-01 & 24 & 1.4e+01 & 77 & 2.7e-01 & 70 \\
\texttt{GBRAINLS} & 2  & 1.4e-04 & 12 & 1.4e-04 & 94 & 1.4e-04 & 18 & 1.4e-04 & 12 & 1.4e-04 & 12 \\
\texttt{GENHUMPS} & 5000  & 4.8e-11 & 1486656 & 6.0e+03 & 1 & 4.7e-11 & 8692146 & 8.9e-08 & 35816 & 5.0e-12 & 529592 \\
\texttt{GENROSE} & 500  & 6.7e-04 & 16490 & 6.1e-05 & 309312 & 2.0e-06 & 66839 & 3.4e-05 & 3639 & 1.1e-04 & 8682 \\
\texttt{GROWTHLS} & 3  & 5.4e-03 & 345 & 3.2e-02 & 2027 & 8.9e-03 & 294 & 2.4e-03 & 4075 & 5.1e-05 & 239 \\
\texttt{GULF} & 3  & 4.0e-08 & 74 & \multicolumn{2}{c|}{failure} & 6.8e-08 & 78 & 5.7e-04 & 19576 & 6.8e-08 & 69 \\
\texttt{HAHN1LS} & 7  & 1.8e+03 & 9794 & 7.5e+01 & 5273 & 8.3e+01 & 332983 & 5.1e-01 & 5459 & 2.8e+00 & 592 \\
\texttt{HAIRY} & 2  & 1.7e-04 & 118 & 2.5e-05 & 993 & 1.2e-03 & 210 & 1.6e-03 & 137 & 1.2e-03 & 100 \\
\texttt{HATFLDD} & 3  & 2.1e-08 & 71 & \multicolumn{2}{c|}{failure} & 1.5e-11 & 75 & 1.0e-07 & 14033 & 1.5e-11 & 69 \\
\texttt{HATFLDE} & 3  & 3.5e-08 & 54 & \multicolumn{2}{c|}{failure} & 1.7e-10 & 57 & 9.8e-08 & 3318 & 1.7e-10 & 51 \\
\texttt{HATFLDFL} & 3  & 4.7e-08 & 283 & \multicolumn{2}{c|}{failure} & 6.6e-08 & 4404 & 5.1e-06 & 28015 & 3.5e-09 & 1078 \\
\texttt{HEART6LS} & 6  & 3.5e-08 & 6521 & 4.0e+00 & 29124 & 5.2e-08 & 3871 & 3.3e+00 & 39973 & 5.2e-08 & 8285 \\
\texttt{HEART8LS} & 8  & 4.0e-10 & 524 & 1.8e-05 & 1466 & 1.9e-09 & 147 & 2.0e-13 & 353 & 1.9e-09 & 379 \\
\texttt{HELIX} & 3  & 1.7e-11 & 36 & 3.4e-05 & 330 & 1.7e-11 & 36 & 3.7e-12 & 32 & 1.7e-11 & 36 \\
\texttt{HIELOW} & 3  & 5.4e-03 & 12 & 6.7e-03 & 87 & 5.4e-03 & 12 & 3.2e-05 & 18 & 5.4e-03 & 12 \\
\texttt{HILBERTA} & 2  & 2.8e-15 & 6 & 5.4e-15 & 56 & 2.2e-16 & 9 & 9.5e-08 & 301 & 6.2e-15 & 6 \\
\texttt{HILBERTB} & 10  & 2.4e-09 & 17 & 3.0e-06 & 202 & 2.4e-14 & 15 & 6.3e-10 & 12 & 2.4e-09 & 13 \\
\texttt{HIMMELBB} & 2  & 7.0e-07 & 18 & \multicolumn{2}{c|}{failure} & 2.1e-13 & 75 & 8.2e-13 & 33 & 1.2e-12 & 19 \\
\texttt{HIMMELBF} & 4  & 4.6e-05 & 308 & \multicolumn{2}{c|}{failure} & 4.6e-05 & 192 & 1.6e-02 & 29526 & 4.6e-05 & 287 \\
\texttt{HIMMELBG} & 2  & 8.6e-09 & 8 & 3.0e-05 & 62 & 8.6e-09 & 12 & 1.0e-13 & 11 & 8.6e-09 & 8 \\
\texttt{HIMMELBH} & 2  & 5.5e-06 & 8 & 7.7e-06 & 67 & 5.5e-06 & 15 & 5.0e-09 & 6 & 5.5e-06 & 9 \\
\texttt{HUMPS} & 2  & 1.0e-12 & 2955 & 4.7e-02 & 39232 & 3.1e-11 & 10767 & 1.0e-07 & 2297 & 2.6e-12 & 6202 \\
\texttt{HYDC20LS} & 99  & 1.1e-03 & 97095959 & 1.9e+06 & 738933 & \multicolumn{2}{c|}{failure} & 1.3e-01 & 93133732 & 1.3e-01 & 96002204 \\
\texttt{INDEF} & 5000  & 7.1e+01 & 297 & \multicolumn{2}{c|}{failure} & 7.1e+01 & 28565674 & 9.1e+01 & 6895561 & 7.1e+01 & 338 \\
\texttt{INDEFM} & 100000  & 1.1e-08 & 134 & \multicolumn{2}{c|}{failure} & \multicolumn{2}{c|}{failure} & 1.2e-02 & 3308 & 4.6e-09 & 92 \\
\texttt{INTEQNELS} & 12  & 2.3e-09 & 12 & 1.3e-05 & 145 & 4.9e-11 & 9 & 4.9e-11 & 15 & 4.9e-11 & 15 \\
\texttt{JENSMP} & 2  & 3.4e-02 & 18 & 3.4e-02 & 213 & 3.4e-02 & 27 & 3.4e-02 & 18 & 3.4e-02 & 18 \\
\texttt{JIMACK} & 3549  & 1.1e-04 & 103654 & 1.4e+00 & 1 & 9.4e-06 & 123549 & 9.1e-08 & 397707 & 8.8e-05 & 105680 \\
\texttt{KIRBY2LS} & 5  & 9.5e-03 & 198 & 5.1e+01 & 349 & 2.5e+00 & 60 & 4.2e+00 & 769 & 2.7e+00 & 83 \\
\texttt{KOWOSB} & 4  & 2.3e-07 & 40 & \multicolumn{2}{c|}{failure} & 1.0e-07 & 36 & 9.9e-08 & 8576 & 1.0e-07 & 40 \\
\texttt{KOWOSBNE} & 4  & 7.0e-08 & 124 & \multicolumn{2}{c|}{failure} & \multicolumn{2}{c|}{failure} & 1.0e-07 & 8375 & 2.4e-08 & 68 \\
\texttt{LANCZOS1LS} & 6  & 3.9e-08 & 484 & \multicolumn{2}{c|}{failure} & 5.2e-08 & 348 & 2.6e-05 & 29889 & 7.6e-08 & 651 \\
\texttt{LANCZOS2LS} & 6  & 3.7e-08 & 461 & 1.3e+02 & 1 & 1.5e-09 & 342 & 2.7e-05 & 29858 & 9.6e-08 & 625 \\
\texttt{LANCZOS3LS} & 6  & 4.1e-08 & 455 & \multicolumn{2}{c|}{failure} & 9.9e-08 & 393 & 2.6e-05 & 29950 & 2.6e-09 & 757 \\
\texttt{LIARWHD} & 5000  & 1.9e-08 & 44 & 3.9e-06 & 5072 & 1.9e-08 & 6202073 & 3.2e-14 & 168 & 1.9e-08 & 43 \\
\texttt{LOGHAIRY} & 2  & 9.2e-07 & 5102 & \multicolumn{2}{c|}{failure} & 8.1e-05 & 15966 & 1.5e-03 & 10003 & 1.5e-06 & 6676 \\
\texttt{LSC1LS} & 3  & 2.4e-07 & 74 & 1.2e-05 & 893 & 2.4e-07 & 81 & 5.7e-08 & 3057 & 2.4e-07 & 58 \\
\texttt{LSC2LS} & 3  & 2.2e-05 & 113 & \multicolumn{2}{c|}{failure} & 5.1e-05 & 156 & 3.8e-02 & 19975 & 9.1e-09 & 162 \\
\texttt{LUKSAN11LS} & 100  & 3.1e-12 & 14138 & 1.9e-07 & 103185 & 1.8e-12 & 800008 & 2.9e-13 & 2684 & 1.8e-12 & 9341 \\
\texttt{LUKSAN12LS} & 98  & 9.2e-03 & 675 & 3.7e-02 & 59360 & 9.2e-03 & 2545 & 1.5e-02 & 411 & 9.1e-03 & 402 \\
\texttt{LUKSAN13LS} & 98  & 5.5e-02 & 324 & 1.8e-02 & 6656 & 5.5e-02 & 18870 & 7.7e-04 & 176 & 5.7e-02 & 237 \\
\texttt{LUKSAN14LS} & 98  & 1.2e-03 & 580 & 1.3e-03 & 47362 & 1.2e-03 & 5703 & 4.2e-06 & 289 & 1.2e-03 & 349 \\
\texttt{LUKSAN15LS} & 100  & 4.7e-03 & 868 & 1.4e+00 & 559146 & 8.8e-04 & 4816 & 9.7e-08 & 1217 & 4.0e-04 & 758 \\
\texttt{LUKSAN16LS} & 100  & 1.2e-05 & 118 & 3.0e+04 & 1 & 1.2e-05 & 1229 & 9.2e-03 & 91 & 1.2e-05 & 123 \\
\texttt{LUKSAN17LS} & 100  & 4.9e-06 & 1043 & 1.5e-01 & 1653079 & 4.9e-06 & 6687 & 2.9e-05 & 1379 & 4.9e-06 & 1208 \\
\texttt{LUKSAN21LS} & 100  & 4.4e-08 & 2042 & 2.8e+00 & 1 & 7.7e-09 & 5922 & 3.3e-08 & 6962 & 7.3e-10 & 1750 \\
\texttt{LUKSAN22LS} & 100  & 7.5e-06 & 1122 & 1.5e-04 & 49915 & 3.6e-05 & 1456 & 1.8e-06 & 1251618 & 3.6e-05 & 893 \\
\texttt{MANCINO} & 100  & 3.4e-05 & 192 & 8.3e-05 & 5269 & 1.2e-07 & 206932 & 1.0e-07 & 45 & 1.1e-07 & 138 \\
\texttt{MARATOSB} & 2  & 9.8e-03 & 2639 & 8.7e+00 & 731 & 4.8e-02 & 3006 & 2.2e-02 & 1566 & 4.8e-02 & 1322 \\
\texttt{MEXHAT} & 2  & 2.0e-05 & 145 & 8.7e+01 & 753 & 6.6e-04 & 96 & 4.3e-04 & 60 & 6.6e-04 & 54 \\
\texttt{MEYER3} & 3  & 1.6e-03 & 1242 & 2.3e-03 & 7573 & 1.1e+03 & 933 & 4.1e-05 & 3780 & 8.9e-04 & 879 \\
\texttt{MGH09LS} & 4  & 1.7e-09 & 571 & \multicolumn{2}{c|}{failure} & 6.5e-10 & 369 & 6.5e-04 & 11810 & 2.1e-07 & 400 \\
\texttt{MGH10LS} & 3  & 7.2e+03 & 987 & 3.3e+06 & 140325 & 4.6e+05 & 552 & 7.4e+26 & 751 & 9.8e+03 & 193 \\
\texttt{MGH17LS} & 5  & 1.6e+00 & 41696 & \multicolumn{2}{c|}{failure} & 9.2e-06 & 4299 & 4.9e-06 & 39945 & 3.2e-05 & 772 \\
\texttt{MISRA1ALS} & 2  & 5.4e-04 & 89 & 2.4e-04 & 669 & 8.2e-02 & 297 & 1.3e-05 & 20002 & 3.5e-03 & 74 \\
\texttt{MISRA1BLS} & 2  & 1.1e-01 & 51 & 7.9e-02 & 481 & 3.0e-04 & 54 & 2.1e-04 & 20002 & 1.1e-01 & 50 \\
\texttt{MISRA1CLS} & 2  & 5.0e+00 & 44 & 3.2e-04 & 417 & 4.5e-04 & 48 & 2.4e-02 & 20002 & 5.0e+00 & 43 \\
\texttt{MISRA1DLS} & 2  & 1.3e+00 & 33 & 5.1e-03 & 271 & 3.1e-02 & 36 & 2.7e-03 & 20002 & 1.3e+00 & 32 \\
\texttt{MODBEALE} & 20000  & 4.3e-08 & 315 & 7.4e-05 & 419667 & 3.1e+05 & 10995895 & 6.6e-09 & 283 & 2.7e-11 & 385 \\
\texttt{MOREBV} & 5000  & 4.7e-08 & 4430 & 8.0e-04 & 1 & 7.4e-09 & 1126 & 1.6e-08 & 50000 & 1.6e-08 & 50001 \\
\end{tabular}
\end{tiny}
\caption{\revise{Results of subproblem solvers in individual \texttt{CUTEst} problems, part 2}}
\label{tab:cutestresults2}
\end{table}

\begin{table}
\begin{tiny}
\begin{tabular}{l|l|ll|ll|ll|ll|ll}
  problem & $ n $ & \multicolumn{2}{c|}{\texttt{GLTR}} & \multicolumn{2}{c|}{\texttt{LSTRS}} & \multicolumn{2}{c|}{\texttt{SSM}} & \multicolumn{2}{c|}{\texttt{ST}} & \multicolumn{2}{c}{\texttt{trlib}} \\
          & & $ \Vert \nabla f \Vert $ & \# $ Hv $  & $ \Vert \nabla f \Vert $ & \# $ Hv $ & $ \Vert \nabla f \Vert $ & \# $ Hv $ & $ \Vert \nabla f \Vert $ & \# $ Hv $ & $ \Vert \nabla f \Vert $ & \# $ Hv $  \\
\texttt{MSQRTALS} & 1024  & 4.6e-08 & 31351 & 1.0e+00 & 482955 & 8.7e-09 & 121693 & 6.4e-08 & 71336 & 7.6e-09 & 27636 \\
\texttt{MSQRTBLS} & 1024  & 4.7e-08 & 27153 & 9.4e-01 & 430163 & 6.4e-08 & 70400 & 6.9e-08 & 27457 & 1.1e-09 & 18431 \\
\texttt{NCB20} & 5010  & 1.2e-05 & 17788 & 2.8e+02 & 1 & 1.9e-08 & 144786 & 5.0e-06 & 45317 & 7.4e-04 & 5662 \\
\texttt{NCB20B} & 5000  & 4.3e-04 & 5964 & 2.8e+02 & 1 & 4.3e-04 & 42004 & 6.9e-04 & 4176 & 4.3e-04 & 3683 \\
\texttt{NELSONLS} & 3  & 4.5e+04 & 514 & 1.6e+05 & 55911 & 3.2e+04 & 1233 & 1.6e-03 & 560 & 2.4e+10 & 578 \\
\texttt{NONCVXU2} & 5000  & 7.3e-06 & 128020 & 3.2e+01 & 2607687 & 8.9e-06 & 8819837 & 1.3e-05 & 2616658 & 2.2e-04 & 41606 \\
\texttt{NONCVXUN} & 5000  & 1.4e-03 & 3407516 & 2.6e+01 & 2438251 & 1.5e-02 & 46939994 & 5.0e-03 & 3275980 & 2.3e-04 & 3292214 \\
\texttt{NONDIA} & 5000  & 4.6e-09 & 23 & 4.9e-07 & 1188 & 2.4e-09 & 5286524 & 6.1e-08 & 217 & 2.2e-09 & 19 \\
\texttt{NONDQUAR} & 5000  & 8.4e-08 & 44199 & 2.0e+04 & 1 & 1.9e-08 & 292931 & 4.1e-07 & 10001858 & 9.6e-08 & 148134 \\
\texttt{NONMSQRT} & 4900  & 8.3e+01 & 648705 & 2.7e+03 & 285111 & 3.3e+02 & 9434595 & 1.7e+00 & 604897 & 3.8e+02 & 590884 \\
\texttt{OSBORNEA} & 5  & 4.1e-08 & 220 & 1.1e-01 & 979 & 6.9e-06 & 126 & 2.6e-05 & 49955 & 6.9e-06 & 181 \\
\texttt{OSBORNEB} & 11  & 3.5e-07 & 409 & 9.4e-05 & 1570 & 7.2e-09 & 90 & 9.0e-08 & 4300 & 7.2e-09 & 314 \\
\texttt{OSCIGRAD} & 100000  & 6.2e-06 & 367 & 3.8e+05 & 1356648 & \multicolumn{2}{c|}{failure} & 6.6e-08 & 205 & 6.8e-08 & 380 \\
\texttt{OSCIPATH} & 10  & 2.5e-03 & 314 & 1.0e+00 & 1220 & 2.0e-02 & 7596 & 2.8e-04 & 80024 & 1.8e-02 & 65900 \\
\texttt{PALMER1C} & 8  & 3.8e-08 & 112 & \multicolumn{2}{c|}{failure} & 5.5e-08 & 1484 & 8.7e+00 & 78722 & 4.5e-08 & 91 \\
\texttt{PALMER1D} & 7  & 3.2e-08 & 70 & \multicolumn{2}{c|}{failure} & 2.3e-08 & 154 & 9.9e-07 & 34028 & 2.5e-08 & 63 \\
\texttt{PALMER2C} & 8  & 1.3e-08 & 83 & \multicolumn{2}{c|}{failure} & 1.5e-08 & 147 & 3.8e-03 & 69856 & 6.8e-09 & 71 \\
\texttt{PALMER3C} & 8  & 2.5e-09 & 84 & \multicolumn{2}{c|}{failure} & 5.8e-09 & 27 & 6.1e-03 & 69785 & 1.2e-09 & 73 \\
\texttt{PALMER4C} & 8  & 1.4e-08 & 96 & \multicolumn{2}{c|}{failure} & 8.8e-09 & 30 & 1.9e-02 & 69905 & 2.9e-09 & 89 \\
\texttt{PALMER5C} & 6  & 8.2e-14 & 39 & 6.3e-14 & 259 & 8.2e-14 & 24 & 8.3e-14 & 21 & 8.3e-14 & 31 \\
\texttt{PALMER6C} & 8  & 1.0e-08 & 92 & \multicolumn{2}{c|}{failure} & 4.6e-09 & 30 & 3.2e-01 & 58799 & 4.8e-09 & 79 \\
\texttt{PALMER7C} & 8  & 4.9e-08 & 121 & \multicolumn{2}{c|}{failure} & 4.5e-09 & 52 & 1.9e-02 & 59657 & 3.8e-08 & 109 \\
\texttt{PALMER8C} & 8  & 1.2e-09 & 111 & \multicolumn{2}{c|}{failure} & 3.5e-09 & 33 & 2.6e-01 & 58896 & 1.1e-09 & 97 \\
\texttt{PARKCH} & 15  & 4.5e-04 & 376 & 7.0e-02 & 1336 & 1.8e-04 & 63 & 6.6e-02 & 221 & 1.8e-04 & 287 \\
\texttt{PENALTY1} & 1000  & 2.3e-06 & 90 & \multicolumn{2}{c|}{failure} & 1.7e+13 & 2270123 & 1.0e-07 & 10284 & 2.9e-07 & 84 \\
\texttt{PENALTY2} & 200  & 1.2e+05 & 326 & 1.2e+05 & 8545 & 1.2e+05 & 61148 & 1.4e+02 & 169 & 1.2e+05 & 315 \\
\texttt{PENALTY3} & 200  & 2.5e-06 & 385 & \multicolumn{2}{c|}{failure} & 5.3e-08 & 101235 & 1.1e-07 & 1064 & 9.4e-08 & 762 \\
\texttt{POWELLBSLS} & 2  & 9.9e-08 & 162 & \multicolumn{2}{c|}{failure} & 6.3e-07 & 378 & 4.0e-04 & 20003 & 8.7e-08 & 139 \\
\texttt{POWELLSG} & 5000  & 9.9e-08 & 121 & \multicolumn{2}{c|}{failure} & 9.4e-08 & 816877 & 1.0e-07 & 381911 & 9.4e-08 & 136 \\
\texttt{POWER} & 10000  & 4.9e-08 & 12229 & 1.2e+14 & 30489 & \multicolumn{2}{c|}{failure} & 1.0e-07 & 13952 & 4.5e-08 & 16380 \\
\texttt{QUARTC} & 5000  & 4.1e-08 & 14236 & 1.3e+13 & 1 & 3.5e-08 & 15362086 & 1.0e-07 & 369300 & 3.5e-08 & 19244 \\
\texttt{RAT42LS} & 3  & 2.1e-01 & 81 & 1.3e-04 & 376 & 7.1e-05 & 66 & 1.6e-04 & 82 & 7.0e-05 & 56 \\
\texttt{RAT43LS} & 4  & 3.1e-01 & 143 & 1.1e+00 & 1098 & 3.1e-01 & 99 & 1.1e-01 & 428 & 3.1e-01 & 106 \\
\texttt{ROSENBR} & 2  & 9.3e-09 & 45 & 4.7e-06 & 521 & 3.9e-12 & 78 & 5.7e-11 & 46 & 3.9e-12 & 42 \\
\texttt{ROSZMAN1LS} & 4  & 9.3e-08 & 3380 & \multicolumn{2}{c|}{failure} & 1.1e-04 & 618 & 2.0e-04 & 29991 & 4.0e-06 & 114 \\
\texttt{S308} & 2  & 3.7e-06 & 18 & 6.4e-06 & 152 & 3.7e-06 & 27 & 1.8e-07 & 17 & 3.7e-06 & 18 \\
\texttt{SBRYBND} & 5000  & 1.3e+06 & 646854 & 2.6e+08 & 1 & 6.5e-08 & 15134337 & 9.7e+05 & 8066984 & 5.5e+03 & 411061 \\
\texttt{SCHMVETT} & 5000  & 2.2e-04 & 198 & 2.4e-04 & 5965 & 2.2e-04 & 2490 & 6.4e-03 & 175 & 2.2e-04 & 170 \\
\texttt{SCOSINE} & 5000  & 7.3e+02 & 9514524 & 3.3e+06 & 897980 & 9.7e-02 & 12118328 & 1.3e+05 & 22909922 & 7.9e+02 & 769553 \\
\texttt{SCURLY10} & 10000  & 1.7e+04 & 8358170 & 4.0e+07 & 1816763 & 3.7e+06 & 13209268 & 5.5e+05 & 10383078 & 9.0e+05 & 1175003 \\
\texttt{SCURLY20} & 10000  & 9.3e+04 & 5264236 & 7.9e+07 & 1762766 & 7.0e+06 & 9436240 & 2.7e+06 & 6338695 & 5.4e+05 & 1153609 \\
\texttt{SCURLY30} & 10000  & 2.5e+05 & 3928297 & 5.5e+07 & 1696073 & 1.0e+07 & 7548932 & 2.8e+06 & 4645780 & 1.2e+06 & 1087564 \\
\texttt{SENSORS} & 100  & 1.4e-04 & 351 & 1.4e-04 & 20908 & 1.4e-04 & 1207 & 1.6e-04 & 74 & 1.3e-04 & 226 \\
\texttt{SINEVAL} & 2  & 1.7e-07 & 101 & 2.0e-06 & 892 & 4.2e-17 & 174 & 5.4e-08 & 257 & 4.3e-17 & 81 \\
\texttt{SINQUAD} & 5000  & 5.4e+00 & 68 & 1.4e-01 & 6325 & 5.4e+00 & 481230 & 2.4e-02 & 38 & 5.4e+00 & 59 \\
\texttt{SISSER} & 2  & 4.3e-08 & 28 & 6.3e-05 & 229 & 4.3e-08 & 48 & 1.9e-07 & 10009 & 4.3e-08 & 32 \\
\texttt{SNAIL} & 2  & 5.0e-10 & 161 & 2.6e-05 & 1525 & 5.0e-10 & 297 & 2.6e-08 & 1232 & 5.0e-10 & 126 \\
\texttt{SPARSINE} & 5000  & 4.7e-08 & 490794 & 8.0e+02 & 724370 & 7.4e-09 & 11750907 & 3.3e-12 & 524818 & 1.4e-11 & 508898 \\
\texttt{SPARSQUR} & 10000  & 5.3e-08 & 937 & \multicolumn{2}{c|}{failure} & 4.5e-08 & 4741240 & 1.0e-07 & 20720 & 4.6e-08 & 1309 \\
\texttt{SPMSRTLS} & 4999  & 4.8e-08 & 2035 & 9.0e-05 & 160194 & 1.3e-08 & 5368 & 9.6e-12 & 4803 & 8.7e-14 & 1587 \\
\texttt{SROSENBR} & 5000  & 4.9e-12 & 28 & 9.6e-05 & 13105 & 4.9e-12 & 3503 & 9.2e-08 & 126 & 4.9e-12 & 28 \\
\texttt{SSBRYBND} & 5000  & 4.7e-08 & 74324 & 2.4e+06 & 244155 & 5.9e-09 & 5712 & 3.6e-08 & 252195 & 6.1e-10 & 83610 \\
\texttt{SSCOSINE} & 5000  & 3.9e+02 & 4072572 & 5.9e+03 & 1 & 1.5e+02 & 46643 & 1.7e-01 & 13991974 & 2.3e+02 & 11185306 \\
\texttt{SSI} & 3  & 4.7e-08 & 1692 & \multicolumn{2}{c|}{failure} & 8.8e-03 & 30003 & 2.2e-04 & 19968 & 3.1e-09 & 2919 \\
\texttt{STRATEC} & 10  & 4.2e-03 & 381 & 5.1e-01 & 1295 & 4.2e-03 & 78 & 3.3e-01 & 704 & 4.2e-03 & 291 \\
\texttt{TESTQUAD} & 5000  & 3.9e-08 & 2104 & 4.4e+07 & 1 & 1.8e-10 & 6723210 & 2.2e-13 & 3304 & 3.7e-10 & 2398 \\
\texttt{THURBERLS} & 7  & 4.2e-01 & 287 & 1.4e-01 & 1392 & 4.2e-01 & 2171 & 8.0e-03 & 1252 & 4.1e-01 & 203 \\
\texttt{TOINTGOR} & 50  & 2.7e-04 & 348 & 7.8e-05 & 30012 & 2.7e-04 & 990 & 6.0e-04 & 225 & 2.7e-04 & 351 \\
\texttt{TOINTGSS} & 5000  & 4.2e-08 & 148 & 3.0e-05 & 3827 & 4.2e-08 & 3893828 & 3.0e-05 & 147 & 3.2e-08 & 82 \\
\texttt{TOINTPSP} & 50  & 9.8e-06 & 450 & 8.0e-05 & 7546 & 4.5e-03 & 2842 & 7.5e-04 & 211 & 4.5e-03 & 248 \\
\texttt{TOINTQOR} & 50  & 4.0e-07 & 79 & 9.9e-05 & 2976 & 1.6e-09 & 458 & 4.9e-08 & 46 & 3.8e-07 & 88 \\
\texttt{TQUARTIC} & 5000  & 2.5e-07 & 32 & 6.4e-07 & 864 & 2.1e-14 & 2626 & 1.0e-07 & 83144 & 0.0e+00 & 35 \\
\texttt{TRIDIA} & 5000  & 4.7e-08 & 1064 & 9.8e-05 & 328271 & 2.5e-09 & 1070690 & 4.2e-14 & 1425 & 9.3e-12 & 1434 \\
\texttt{VARDIM} & 200  & 2.2e-09 & 33 & 8.5e-05 & 38751 & 9.5e-09 & 925519 & 1.7e-09 & 50 & 2.6e-09 & 33 \\
\texttt{VAREIGVL} & 50  & 3.8e-08 & 436 & 2.9e-07 & 13815 & 1.4e-10 & 2761 & 4.0e-08 & 425 & 7.7e-09 & 457 \\
\texttt{VESUVIALS} & 8  & 1.1e-02 & 821 & 4.2e+06 & 974 & 1.9e-02 & 11806 & 2.3e+02 & 69599 & 1.7e+01 & 802 \\
\texttt{VESUVIOLS} & 8  & 1.2e+01 & 382 & 1.5e+08 & 1 & 3.9e+02 & 3168 & 2.7e-01 & 11149 & 3.9e+02 & 187 \\
\texttt{VESUVIOULS} & 8  & 4.7e-03 & 157 & 2.4e+04 & 1417 & 9.7e-05 & 685 & 3.3e-02 & 131206 & 4.1e-03 & 237 \\
\texttt{VIBRBEAM} & 8  & 4.3e-04 & 465 & 4.5e+00 & 2609 & 2.0e-02 & 12818 & 3.7e-04 & 4213 & 1.2e-01 & 336 \\
\texttt{WATSON} & 12  & 4.7e-08 & 369 & \multicolumn{2}{c|}{failure} & 6.9e-09 & 57 & 1.8e-06 & 71328 & 9.4e-08 & 314 \\
\texttt{WOODS} & 4000  & 3.2e-12 & 250 & 6.1e+02 & 596298 & 1.4e-12 & 2771525 & 1.1e-10 & 317 & 1.4e-12 & 266 \\
\texttt{YATP1LS} & 2600  & 1.2e-09 & 59 & 8.1e-09 & 75319 & 8.6e-10 & 873596 & 1.0e-10 & 57 & 1.2e-09 & 52 \\
\texttt{YATP2LS} & 2600  & 5.7e-01 & 6160859 & 4.1e+02 & 5486629 & 1.1e+00 & 2139765 & 1.3e-10 & 35 & 4.4e-03 & 163257 \\
\texttt{YFITU} & 3  & 1.2e-05 & 166 & \multicolumn{2}{c|}{failure} & 4.7e-09 & 147 & 3.9e-03 & 29960 & 4.7e-09 & 137 \\
\texttt{ZANGWIL2} & 2  & 0.0e+00 & 2 & 1.9e-15 & 32 & 0.0e+00 & 6 & 0.0e+00 & 2 & 0.0e+00 & 2 \\
\end{tabular}
\end{tiny}
\caption{\revise{Results of subproblem solvers in individual \texttt{CUTEst} problems, part 3}}
\label{tab:cutestresults3}
\end{table}

\subsection{Function Space Problem}
\label{sec:fenics}

We solved a modified variant of \texttt{SCDIST1} \cite{Casas1986,Meyer2005} of the \texttt{OPTPDE} benchmark library \cite{OPTPDE,OPTPDE2014} for PDE constrained optimal control problems.
The state constraint has been dropped and a trust region constraint added in order to obtain the following function space trust region problem:
\begin{alignat*}{3}
    & \min_{y \in H^1(\Omega), u \in L^2(\Omega)} \quad   & \omit $ \displaystyle \tfrac 12 \Vert y - y_{\text{d}} \Vert_{L^2(\Omega)}^2 + \tfrac \beta 2 \Vert u - u_{\text{d}} \Vert_{L^2(\Omega)}^2 $ \span \span \\
  & \text{\,\,s.t.} &  -\triangle y + y & = u, & x \in \Omega \\
  &                 &  \partial_n  y   & = 0, & x \in \partial \Omega \\
  &                 &  \Vert y \Vert_{L^2(\Omega)}^2 + \Vert u \Vert_{L^2(\Omega)}^2 & \le \Delta^2
\end{alignat*}
Here $ \Omega \subseteq \mathbb R^n $, $ L^2(\Omega) $ denotes the Lebesgue space of square integrable functions $ f: \Omega \to \mathbb R $, $ H^1(\Omega) $ the sobolev space of square integrable functions that admit a square integrable weak derivative and $ \triangle: H $ is the Laplace operator $ \triangle = \sum_{i=1}^{n} \partial^2_{ii} $.

Tracking data $ y_{\text d}, u_{\text d} $ has been used as specified in \texttt{OPTPDE} where typical regularization parameters have been considered in the range $ 10^{-8} \le \beta \le 10^{-3} $.
Different geometries $ \Omega \in \{ (0,1)^2, (0,1)^3, \{ x \in \mathbb R^2 \, \vert \, \Vert x \Vert \le 1 \}, \{ x \in \mathbb R^3 \, \vert \, \Vert x \Vert \le 1 \} \} $ have been studied.

The finite element software \texttt{FEnICS} has been used to obtain a finite element discretization of the problem:
\begin{alignat*}{3}
    & \min_{\boldsymbol y \in \mathbb R^{n_y}, \boldsymbol u \in \mathbb R^{n_u}} \quad   & \omit $ \displaystyle \tfrac 12 \Vert \boldsymbol y - \boldsymbol y_{\text{d}} \Vert_{M}^2 + \tfrac \beta 2 \Vert \boldsymbol u - \boldsymbol u_{\text{d}} \Vert_{M}^2 $ \span \span \\
  & \text{\,\,s.t.} &  A \boldsymbol y - M \boldsymbol u& = 0, \\
  &                 &  \Vert \boldsymbol y \Vert_{M}^2 + \Vert \boldsymbol u \Vert_{M}^2 & \le \Delta^2,
\end{alignat*}
where $ M $ denotes the mass matrix and $ A = K + M $ with $ K $ being the stiffness matrix.

We used the approach suggested by Gould et al. \cite{Gould2001} to solve this equality constrained trust region problem:
\begin{enumerate} 
\item A null-space projection in the precondioning step of the Krylov subspace iteration is used to satisfy the discretized PDE constraint.
The required preconditioner is given by 
    $$ \begin{pmatrix} \boldsymbol y \\ \boldsymbol u \end{pmatrix} \mapsto \begin{pmatrix} I & 0 & 0 \\ 0 & I & 0 \end{pmatrix} \begin{pmatrix} M & 0 & A \\ 0 & M & -M \\ A & -M & 0  \end{pmatrix}^{-1} \begin{pmatrix} I & 0 \\ 0 & I \\ 0 & 0 \end{pmatrix} \begin{pmatrix} \boldsymbol y \\ \boldsymbol u \end{pmatrix}.$$
\item We used MINRES \cite{Paige1975} for solving with the linear system arising in this preconditioner to high accuracy. MINRES iterations themselves
are preconditioned using the approximate Schur-complement preconditioner 
$$ \begin{pmatrix} \tilde M & & \\ & \tilde M & \\ & & \tilde A M^{-1} \tilde A \end{pmatrix}^{-1}, $$ 
as proposed by \cite{Rees2010}. This preconditioner is an approximation to the optimal preconditioner 
$$ \begin{pmatrix} M & & \\ & M & \\ & & A M^{-1} A + M \end{pmatrix}^{-1} $$ 
that would lead to mesh-independent MINRES convergence in three iterations, provided exact arithmetic~\cite{Kuznetsov1995,Murphy1999} would be used.
\item In the MINRES preconditioner of step (2), products with $ \tilde M^{-1} $ and $ \tilde A^{-1} $ are computed using truncated conjugate gradients (CG) to high accuracy, again preconditioned using an algebraic multigrid as preconditioner.
\end{enumerate}

\begin{figure}
    \includegraphics{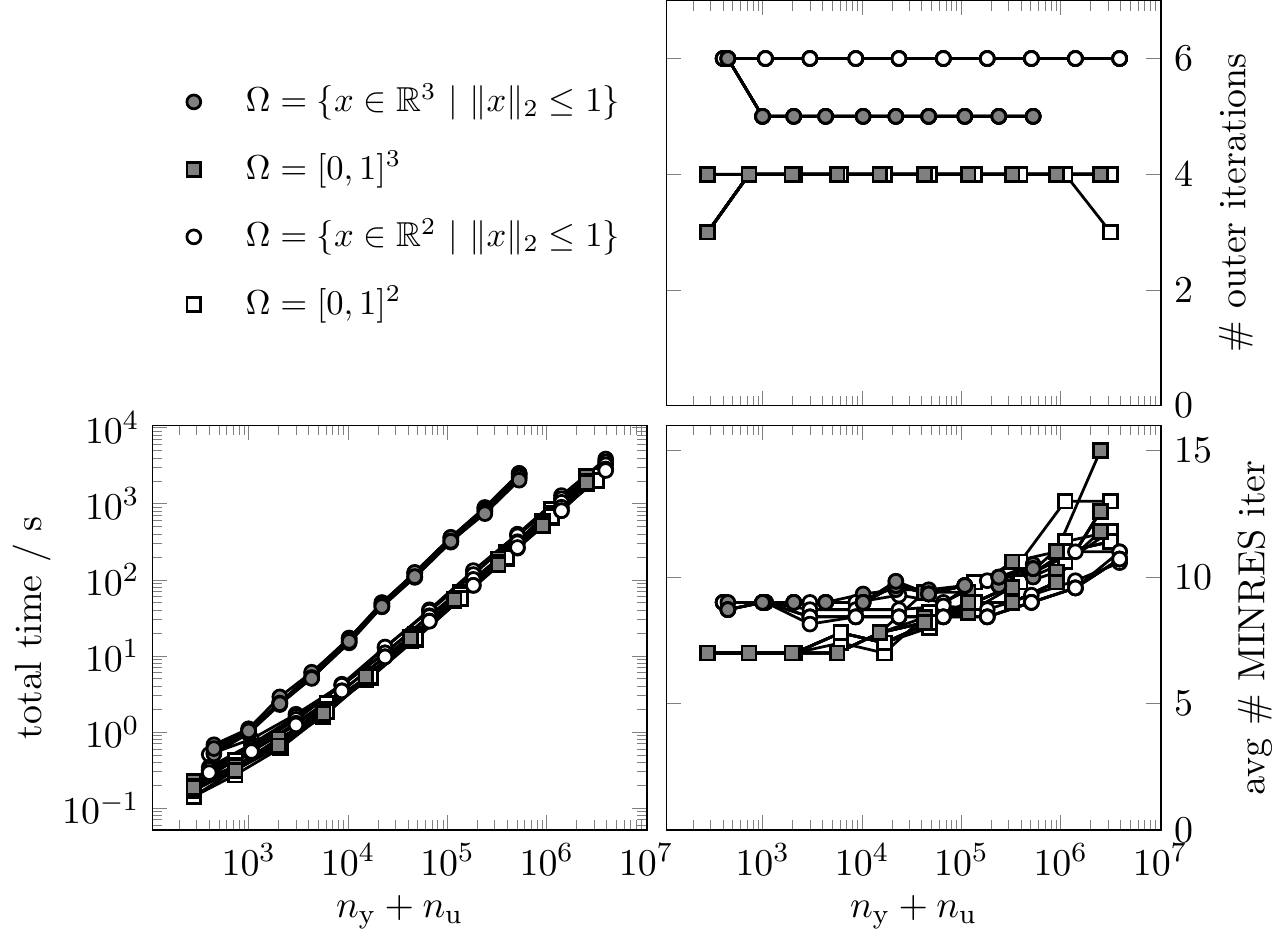}
    \caption{Results for distributed control trust region problem for different mesh sizes.
        Results are shown for four different geometries.
        Regularization parameters $ \beta \in \{10^{-3}, 10^{-4}, 10^{-5}, 10^{-6}, 10^{-7}, 10^{-8} \} $ have been considered, however computational results for a fixed geometry hardly change with $ \beta $ leading to near-identical plots.}
\label{fig:pde_all}        
\end{figure}

In Fig.~\ref{fig:pde_all}, it can be seen that using the GLTR method for these
function space problems yields a solver with mesh-independent
convergence behavior. The number of outer iterations is virtually constant on
a wide range of different meshes and varies at most by one iteration. The
number of inner (MINRES) iterations varies only slightly, as is to be
expected due to the use of an approximately optimal preconditioner in step (2).

    \section{Conclusion}
\label{sec:conclusions}

We presented \texttt{trlib} which implements Gould's
Generalized Lanczos Method for trust region problems.
Distinct features of the implementation are by the choice of a reverse communication interface that does not need access to vector data but only to dot products between vectors and by the implementation of preconditioned Lanczos iterations to build up the Krylov subspace.
The package \texttt{trbench}, which relies on \texttt{CUTEst}, has been introduced as a test bench for trust region problem solvers.
Our implementation \texttt{trlib} shows similar and favorable performance in comparison to the \texttt{GLTR} implementation of the Generalized Lanczos Method and also in comparison to other iterative methods for solving the trust region problem.

Moreover, we solved an example from PDE constrained optimization to show that the implementation can be used for problems stated in Hilbert space as a function space solver with almost discretization independent behaviour in that example.

    \section*{Funding}
    \small
    F.~Lenders acknowledges funding by the German National Academic Foundation.
    F.~Lenders and C.~Kirches acknowledge funding by DFG Graduate School 220,
    funded by the German Excellence Initiative. C.~Kirches acknowledges
    financial support by the European Union within the 7\textsuperscript{th}
    Framework Programme under Grant Agreement n\textsuperscript{o} 611909 and by
    Deutsche Forschungsgemeinschaft through Priority Programme 1962 ``Non-smooth
    and Complementarity-based Distributed Parameter Systems: Simulation and
    Hierarchical Optimization''.  C.~Kirches and A.~Potschka acknowledge funding
    by the German Federal Ministry of Education and Research program
    ``Mathematics for Innovations in Industry and Service'', grants
    n\textsuperscript{o} 05M2013-GOSSIP \revise{and 05M2016-MoPhaPro}.
    A. Potschka acknowledges funding by the European Research Council Adv. Inv. Grant MOBOCON 291 458.
    \revise{We are grateful to two anonymous referees who helped to significantly improve the exposition of this article and would like to thank R.~Herzog for bibliographic pointers regarding Krylov subspace methods in Hilbert space.}

\end{document}